\newcommand{\diag}{\operatorname{diag}}
\newcommand{\spa}[1]{\operatorname{span} \{#1\}}
\newcommand{\Ima}{\operatorname{Im}}
\newcommand{\scalprod}[2]{\left\langle #1,#2 \right\rangle}
\def\sp(#1,#2){\left\langle #1,#2 \right\rangle}
\def\bp#1{\sp(#1)}
\newcommand{\ind}{\operatorname{ind}} 
\DeclareFontFamily{U}{matha}{\hyphenchar\font45}
\DeclareFontShape{U}{matha}{m}{n}{
      <5> <6> <7> <8> <9> <10> gen * matha
      <10.95> matha10 <12> <14.4> <17.28> <20.74> <24.88> matha12
      }{}
\DeclareSymbolFont{matha}{U}{matha}{m}{n}
\DeclareFontFamily{U}{mathx}{\hyphenchar\font45}
\DeclareFontShape{U}{mathx}{m}{n}{
      <5> <6> <7> <8> <9> <10>
      <10.95> <12> <14.4> <17.28> <20.74> <24.88>
      mathx10
      }{}
\DeclareSymbolFont{mathx}{U}{mathx}{m}{n}
\DeclareMathSymbol{\obot}         {2}{matha}{"6B}
\DeclareMathSymbol{\bigobot}       {1}{mathx}{"CB}
\newcommand{\R}{\mathbb{R}}
\newcommand{\C}{\mathbb{C}}
\newcommand{\Fi}{\mathbb{F}}
\newcommand{\sgn}{\operatorname{sgn}}
\newcommand{\conj}[1]{\overline{#1}}
\theoremstyle{break} 
\newtheorem{theorem}{Theorem}[section]
\newtheorem{lemma}[theorem]{Lemma}
\newtheorem{example}[theorem]{Example}
\newtheorem{remark}[theorem]{Remark}
\newtheorem{definition}[theorem]{Definition}
\theoremstyle{nonumberplain}
\newtheorem{proof}{Proof}
\numberwithin{equation}{section} 
\newcounter{partCounter}
\newenvironment{parts}{
	\begin{list}{\bfseries{}Case \arabic{partCounter}~}{\usecounter{partCounter}}
}{
	\end{list}
}
\begin{document}
\pagenumbering{roman}
\title{Canonical forms of Self-Adjoint Operators in Minkowski Space-time}
\author{Krishan Rajaratnam\footnote{Department of Applied Mathematics, University of Waterloo, Canada; e-mail: k2rajara@uwaterloo.ca}}
\date{\today} 
\maketitle
\begin{abstract}\centering
We classify the possible Jordan canonical forms of self-adjoint operators in Minkowski space-time (in fact in pseudo-Euclidean space, i.e. an indefinite inner product space) and we show how to obtain a Jordan canonical basis which also puts the metric in a canonical form.
\end{abstract}
\tableofcontents
\cleardoublepage
\phantomsection		



\addcontentsline{toc}{section}{List of Results}
\textbf{List of Results}
\theoremlisttype{allname} 
\listtheorems{theorem,proposition,corollary}
\cleardoublepage
\phantomsection

\pagenumbering{arabic}
\newpage
\section{Introduction}

Self-adjoint operators are ubiquitous in pseudo-Riemannian geometry and hence in relativity theory as well. Any linear operator metrically equivalent to a symmetric contravariant tensor is self-adjoint. The Ricci tensor, the Hessian of a smooth function, the shape operator associated with a pseudo-Riemannian hypersurface or an umbilic pseudo-Riemannian submanifold \cite[Definition~4.18]{barrett1983semi}, and Killing and conformal Killing tensors are all examples of such tensors. In relativity theory the energy-momentum tensor is one as well.

Most of our notions come from the theory of inner products, the difference being here that we weaken the positive definite condition on the inner product to just indefinite\footnote{We will define these notions more precisely in the next section.} (i.e. non-degenerate). We refer to such a product as a scalar product \cite{barrett1983semi}, and a vector space equipped with a scalar product a scalar product space. The definitions of self-adjoint, unitary, normal etc. hold in this more general context as well. So we say a linear operator $T$ on vector space $V$ with scalar product $\bp{\cdot, \cdot}$ is self-adjoint if $\bp{Tx,y} = \bp{x,Ty}$ for all $x,y \in V$. The problem we solve here is that of finding a simultaneous canonical form for a self-adjoint operator $T$ and the scalar product $\bp{\cdot, \cdot}$.

This problem has some equivalent formulations that are worth noting. It often appears in the literature as finding canonical forms for ``pairs of (real) symmetric matrices (bilinear forms)'' under congruence where one is assumed to be non-singular (i.e. invertible). This equivalence can be observed if we fix a scalar product $\bp{\cdot, \cdot}$ and a self-adjoint operator $T$, then define another bilinear form $[\cdot, \cdot]$ by $[x, y] = \bp{T x, y}$. Then the problem of finding a simultaneous canonical form for $T$ and $\bp{\cdot, \cdot}$ is equivalent to that of finding one for the symmetric bilinear forms $\bp{\cdot, \cdot}$ and $[\cdot, \cdot]$ (or their matrix representations under congruence). Also a symmetric bilinear form is equivalent to a quadratic form by the polarization identity; this gives another formulation of the problem.

The problem of obtaining the possible canonical forms for self-adjoint operators in Minkowski space-time and more generally in a scalar product space has been addressed many times. A solution appeared in \cite{bromwich1906quadratic} where the problem was formulated as that of finding canonical forms for pairs of quadratic forms. Also see \cite[Chapter~5]{Gohberg2005} for another more contemporary solution, historical notes and references. The solution in \cite{Gohberg2005} is for a more general setting where the scalar product is a sesquilinear form. It also contains a classification of unitary operators and much more related results. Also see \cite{Lancaster2005} and references therein; in \cite{Lancaster2005} the Kronecker canonical form is used to solve a more general problem. 

Even though the results are out there, they are relatively hard to find and often outdated. Our approach to this problem is based on O'Neil's solution given in exercises 18-19 in \cite[P.~260-261]{barrett1983semi}. Motivated by the Jordan canonical form, we develop an algorithm to find a Jordan canonical basis for a self-adjoint operator which also gives a canonical form for the scalar product. Our derivation has some advantages: it only depends on some results from Jordan form theory, we are able to prove existence and uniqueness of the canonical form independently of the corresponding results from Jordan form theory (i.e. ours results have less dependencies), and we obtain a simple algorithm to calculate the canonical forms for self-adjoint operators. The draw back is that our solution is less general than others.

\section{Preliminary Results}

\subsection{Pseudo-Euclidean Spaces}

Suppose $V$ is a vector space over a field $\Fi$ (which for us is $\R$ or $\C$). A symmetric bilinear form on $V$ is bilinear function $\bp{\cdot,\cdot} : V \times V \rightarrow \Fi$ such that $\bp{x,y} = \bp{y,x}$ for all $x,y \in V$. A symmetric bilinear form $\bp{\cdot,\cdot}$ is called non-degenerate if for a fixed $x \in V$, $\bp{x,y} = 0$ for all $y \in V$ implies $x = 0$.

Given a non-zero vector $x \in V$, it is classified as follows:

\begin{description}
	\item[timelike] If $\bp{x,x} < 0$
	\item[lightlike] If $\bp{x,x} = 0$
	\item[spacelike] If $\bp{x,x} > 0$
\end{description}

We define a scalar product on a vector space $V$ as a non-degenerate symmetric bilinear form on $V$. A vector space $V$ equipped with a scalar product is called a scalar product space. The index of a real scalar product space $V$, denoted $\ind V$, is defined as the number of timelike basis vectors in an orthogonal basis for the scalar product, which is an invariant of the scalar product by Sylvester's law of inertia. For all notions related to the index, we will assume the scalar product space is real.

The Euclidean metric given as follows is an example of a non-degenerate scalar product:

\begin{equation}
	\bp{x,y} = \sum\limits_{i=1}^{n}x_{i}y_{i}
\end{equation}

A vector space equipped with the Euclidean metric is called a Euclidean space. The standard example of a non-degenerate scalar product with non-zero index is the Minkowski metric given as follows:

\begin{equation}
	\bp{x,y} = \sum\limits_{i=2}^{n}x_{i}y_{i} - x_{1}y_{1}
\end{equation}

A vector space equipped with the Minkowski metric is called a Lorentz space (Minkowski space).

Given a subspace $H \subseteq V$, we denote the orthogonal subspace of $H$ as $H^{\perp}$ which is defined as follows:

\begin{equation}
	H^{\perp} = \{x \in V : \bp{x,y} = 0 \quad \text{for all $y \in H$} \}
\end{equation}

$H^{\perp}$ is complementary to $H$ (i.e. $V = H \oplus H^{\perp}$) iff the restriction of the scalar product to $H$ is non-degenerate \cite[P.~49]{barrett1983semi}. One can also show that for a non-degenerate subspace $H$, $\ind V = \ind H + \ind H^{\perp}$. If $U,W$ are subspaces of $V$, then $V = U \obot W$ means that $V = U \oplus W$ and $U \perp W$.

A linear operator $T$ on a scalar product space $V$ is said to be self-adjoint if $\bp{Tx,y} = \bp{x,Ty}$ for all $x,y \in V$. Note that any polynomial in $T$ is self-adjoint if $T$ is.

We denote the complexification of a real vector space $V$ by $V^{\C}$. We define the complexified bilinear form to be the symmetric bilinear form in $V^{\C}$ obtained from the real one by a linear extension. Note that the complexified bilinear form is symmetric, in contrast with the usual Hermitian form which is not symmetric. It follows immediately from the definition that the complexified bilinear form is non-degenerate iff the real bilinear form is non-degenerate. Thus a real scalar product space, $V$, can be canonically complexified to a complex scalar product space, hereafter denoted $V^{\C}$.

A set $v_{1},...,v_{n}$ for $V$ is said to be \emph{orthonormal} if $\bp{v_{i},v_{i}} = \pm 1$ and $\bp{v_{i},v_{j}} = 0$ for $i \neq j$. Clearly an orthonormal set forms a basis for $V$ and the metric in this basis is $g = \diag(\pm 1,..., \pm 1)$.

Now assume the scalar product $\bp{\cdot, \cdot}$ is (possibly) degenerate. We say a sequence of vectors $v_1, \dotsc , v_p$ is a \emph{skew-normal sequence} of (length $p$) and (sign $\varepsilon = \pm 1$) if $\bp{v_{i},v_{j}} = \varepsilon$ when $i + j = p+1$ and $\bp{v_{i},v_{j}} = 0$ otherwise. We will show shortly that these vectors are necessarily linearly independent, so let $H = \spa{v_1, \dotsc , v_p}$. Then the bilinear form restricted to $H$ is skew-diagonal and is given as follows:

\begin{equation}
		g = \begin{pmatrix}
		0 &  & \varepsilon \\ 
	 	& \iddots &  \\ 
		\varepsilon &  & 0
		\end{pmatrix}
\end{equation}

The following lemma shows that a skew-normal sequence forms a linearly independent set and it gives the index of the space spanned by such a set of vectors.

\begin{lemma} \label{lem:nullBasSig}
	Suppose $V$ is a (possibly complex) scalar product space. Suppose $\varepsilon \in \{-1,1\}$ and let $z_1,\dotsc,z_p$ be a skew-normal sequence of sign $\varepsilon$.
	
	Then $z_1,\dotsc,z_p$ form a linearly independent set and the subspace $H$ spanned by these vectors is non-degenerate and has index:

	\begin{equation}
		\ind H =
		\begin{cases}
			\lfloor \frac{p+1}{2} \rfloor & \text{if } \epsilon = -1 \\
			p- \lfloor \frac{p+1}{2} \rfloor & \text{if } \epsilon = 1
		\end{cases}
	\end{equation}
\end{lemma}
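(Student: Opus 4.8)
The plan is to read everything off the anti-diagonal Gram matrix. For linear independence I would take a relation $\sum_{i=1}^{p} c_i z_i = 0$ and pair it against each $z_j$. Because $\bp{z_i, z_j}$ vanishes unless $i+j = p+1$, the identity $0 = \bp{\sum_i c_i z_i, z_j}$ collapses to $\varepsilon\, c_{p+1-j} = 0$, so every $c_i$ vanishes and the $z_i$ are independent. The identical computation proves non-degeneracy of the restriction to $H$: if $x = \sum_i c_i z_i$ satisfies $\bp{x, z_j} = 0$ for all $j$, then $\varepsilon\, c_{p+1-j} = 0$ for each $j$, whence $x = 0$.

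For the index, the key idea is to split $H$ into mutually orthogonal blocks by pairing each $z_i$ with its partner $z_{p+1-i}$. When $i \neq p+1-i$ the span of $z_i, z_{p+1-i}$ carries the Gram matrix $\left(\begin{smallmatrix} 0 & \varepsilon \\ \varepsilon & 0 \end{smallmatrix}\right)$, and the normalized vectors $u = \tfrac{1}{\sqrt{2}}(z_i + z_{p+1-i})$ and $w = \tfrac{1}{\sqrt{2}}(z_i - z_{p+1-i})$ satisfy $\bp{u,u} = \varepsilon$, $\bp{w,w} = -\varepsilon$, and $\bp{u,w} = 0$. Thus each such block contributes exactly one timelike and one spacelike orthonormal vector, independently of the sign $\varepsilon$. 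There are $\lfloor p/2 \rfloor$ of these blocks, so they contribute $\lfloor p/2 \rfloor$ timelike directions. When $p$ is odd there is one leftover middle vector $z_{(p+1)/2}$, and since its two indices sum to $p+1$ it satisfies $\bp{z_{(p+1)/2}, z_{(p+1)/2}} = \varepsilon$; after normalization it supplies one more orthonormal vector, which is timelike exactly when $\varepsilon = -1$.

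Assembling these orthonormal vectors gives an orthonormal basis of $H$ (reconfirming non-degeneracy), and shows that $\ind H$ equals $\lfloor p/2 \rfloor$ plus an extra unit precisely when $p$ is odd and $\varepsilon = -1$. A brief case split on the parity of $p$ then reconciles this with the stated formula, via the elementary identities $\lfloor p/2 \rfloor = p - \lfloor (p+1)/2 \rfloor$ and $\lfloor p/2 \rfloor + 1 = \lfloor (p+1)/2 \rfloor$ for odd $p$. I expect no real obstacle: every computation is elementary once the pairing is in place. The only point demanding care is the floor-function bookkeeping --- correctly detecting when the middle vector is present and tracking its sign --- so that the two cases $\varepsilon = \pm 1$ reproduce the asymmetric-looking formula.
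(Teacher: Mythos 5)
Your proposal is correct and follows essentially the same route as the paper: both diagonalize the anti-diagonal Gram matrix by pairing $z_i$ with $z_{p+1-i}$ and passing to the normalized sum/difference vectors $\tfrac{1}{\sqrt{2}}(z_i \pm z_{p+1-i})$, treating the middle vector separately when $p$ is odd, and then reading off the index. The only difference is cosmetic: you prove linear independence and non-degeneracy directly by pairing a relation against each $z_j$, whereas the paper obtains both as byproducts of the orthogonal basis it constructs.
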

\begin{proof}
	Given $1 \leq i \leq p$, denote the additive conjugate of i by $i' = p + 1 - i$. Note that $1 \leq i' \leq p$ and $i + i' = p+1$.
	Suppose $i < \frac{p+1}{2}$ and $j > \frac{p+1}{2}$. Define vectors $v_{i}$ and $v_{j}$ as follows:
	\begin{align}
		v_{i} & = \frac{1}{\sqrt{2}}( z_{i}+ z_{i'}) \\
		v_{j} & = \frac{1}{\sqrt{2}}(z_{j} - z_{j'})
	\end{align}
	If $2i = p+1$ then let $v_{i} = z_{i}$. Now for $i < j$ suppose $i+j = p+1$, then note that $i' = j$ and $j' = i$, observe that:
	
	\begin{align}
		\bp{v_{i},v_{j}} & = \frac{1}{2} (\bp{z_{i} + z_{j} , z_{j} - z_{i}} ) \\
		& = \frac{1}{2} (\bp{z_{j}, z_{j}} + \bp{z_{i}, z_{i}} ) \\
		& = 0
	\end{align}

	\begin{align}
		\bp{v_{i},v_{i}} & = \frac{1}{2} (\bp{z_{j} + z_{i} , z_{j} + z_{i}} ) \\
		& = \frac{1}{2} (2 \bp{z_{i}, z_{j}}) \\
		& = \epsilon
	\end{align}

	\begin{align}
		\bp{v_{j},v_{j}} & = \frac{1}{2} (\bp{z_{j} - z_{i} , z_{j} - z_{i}} ) \\
		& = -\frac{1}{2} (2 \bp{z_{i}, z_{j}}) \\
		& = -\epsilon
	\end{align}
	
	Furthermore if $2i = p+1$ then $\bp{v_{i},v_{i}} = \epsilon$. Now suppose $i + j \neq p+1$, then the only way in which $\bp{v_{i},v_{j}} \neq 0$ if $i+j'=p+1$ or $i' + j = p+1$, but one can see immediately that $i+j'=p+1$ iff $i' + j = p+1$. So suppose $i+j'=p+1$. Then $p+1 = i + i' = i + p+1 - j$, hence $i = j$, in which case $\bp{v_{i},v_{j}}$ reduces to the ones examined.
	
	Thus we conclude that $\bp{v_{i},v_{i}} = \epsilon$ if $i \leq \lfloor \frac{p+1}{2} \rfloor$, $\bp{v_{i},v_{i}} = -\epsilon$ if $i > \lfloor \frac{p+1}{2} \rfloor$ and $\bp{v_{i},v_{j}} = 0$ if $i \neq j$. Thus the conclusions follow.
\end{proof}

\subsection{Finite Dimensional Operator Theory}

Given a complex scalar $\lambda$, a non-zero vector $x \in V$ is called a generalized eigenvector for T corresponding to $\lambda$ if $(T-\lambda I)^{p} x = 0$ for some positive integer p.

\begin{definition}[Generalized Eigenspaces]
	Let $T$ be a linear operator on finite-dimensional complex vector space $V$ and let $\lambda$ be an eigenvalue of $T$. The generalized eigenspace (g-space) corresponding to $\lambda$, denoted $K_{\lambda}$, is the subset of $V$ defined by:
	
	\begin{equation}
		K_{\lambda} = \{ x \in V : (T-\lambda I)^{p}(x) = 0 \: \text{for some positive integer p} \}
	\end{equation}
\end{definition}

We say a set of distinct scalars $\lambda_{1},...,\lambda_{k}$ are the spectrum of $T$ if they constitute all eigenvalues of $T$. Furthermore the kernel of an operator $T$ is denoted by $\ker T$ or $N(T)$.

The following results concerning the g-spaces of a linear operator are proven in \cite[Section~7.1]{friedberg2003linear}.

\begin{theorem} \label{thm:gSpaces}
	Let T be a linear operator on finite-dimensional complex vector space V. Suppose $\lambda$ and $\mu$ are distinct eigenvalues of T, then the following statements are true:
	
	\begin{enumerate}
		\item $K_{\lambda}$ is a non-zero T invariant subspace of V
		\item $K_{\lambda} \cap K_{\mu} = \{ 0\}$
		\item Let $U = (T- \lambda I)|_{K_{\mu}}$, then $K_{\mu}$ is $(T- \lambda I)$-invariant and $U$ is a bijection.
		\item If m is the algebraic multiplicity of $\lambda$ then $K_{\lambda} = N(T- \lambda I)^{m}$ and $\dim K_{\lambda} \leq m$.
		\item If $\lambda_{1},...,\lambda_{k}$ is the spectrum of T, then $V = \bigoplus\limits_{i=1}^{k} K_{\lambda_{i}}$
	\end{enumerate}
\end{theorem}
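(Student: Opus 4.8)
The plan is to establish the five statements in order, using only that every polynomial in $T$ commutes with $T$, Bezout's identity for coprime polynomials, and (for the last two parts) a dimension count based on the characteristic polynomial, which splits over $\C$. Statements (1) and (2) are the routine ones. For (1), each $N(T-\lambda I)^{p}$ is $T$-invariant because $T$ commutes with $(T-\lambda I)^{p}$, and $K_{\lambda} = \bigcup_{p} N(T-\lambda I)^{p}$ is an increasing union of these, hence itself a $T$-invariant subspace; it is non-zero since any eigenvector for $\lambda$ lies in it. For (2), I would use that $(t-\lambda)^{p}$ and $(t-\mu)^{q}$ are coprime, giving polynomials $a,b$ with $a(t)(t-\lambda)^{p}+b(t)(t-\mu)^{q}=1$; applying the operator identity $a(T)(T-\lambda I)^{p}+b(T)(T-\mu I)^{q}=I$ to any $x \in K_{\lambda}\cap K_{\mu}$ (with $p,q$ chosen to annihilate $x$) forces $x=0$.

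For (3), $K_{\mu}$ is $(T-\lambda I)$-invariant because it is both $T$-invariant by (1) and trivially $\lambda I$-invariant. As $K_{\mu}$ is finite-dimensional, to see that $U = (T-\lambda I)|_{K_{\mu}}$ is a bijection it is enough to prove injectivity: if $x \in K_{\mu}$ with $(T-\lambda I)x = 0$, then applying $(T-\mu I)^{q}$ (which annihilates $x$) and using $Tx = \lambda x$ gives $(\lambda-\mu)^{q}x = 0$, forcing $x = 0$ since $\lambda \neq \mu$.

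The crux is (4), which links $K_{\lambda}$ to the algebraic multiplicity $m$. The chain $N(T-\lambda I)^{k}$ is increasing and stabilizes at some index $r$, so $K_{\lambda} = N(T-\lambda I)^{r}$, and I would then invoke the Fitting decomposition $V = N(T-\lambda I)^{r} \oplus \Ima(T-\lambda I)^{r}$ into $T$-invariant subspaces on which $T-\lambda I$ acts nilpotently and invertibly respectively. Writing the characteristic polynomial of $T$ as the product of those of the two restrictions, the first factor is $(t-\lambda)^{\dim K_{\lambda}}$ by nilpotency, while $\lambda$ is not a root of the second; hence the multiplicity of $\lambda$ is exactly $m = \dim K_{\lambda}$. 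Since $(T-\lambda I)^{m}$ then vanishes on $K_{\lambda}$, we obtain $K_{\lambda} = N(T-\lambda I)^{m}$ and $\dim K_{\lambda} \leq m$. This Fitting/characteristic-polynomial step is the main obstacle; everything else is formal.

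Finally, for (5) I would first show the sum $\sum_{i} K_{\lambda_{i}}$ is direct: for fixed $i$ the operator $p_{i}(T)$ with $p_{i}(t) = \prod_{j \neq i}(t-\lambda_{j})^{m_{j}}$ annihilates each $K_{\lambda_{j}}$ with $j \neq i$ yet restricts to a bijection on $K_{\lambda_{i}}$ by (3), so any relation $\sum_{j} x_{j} = 0$ with $x_{j} \in K_{\lambda_{j}}$ forces $p_{i}(T)x_{i} = 0$ and hence $x_{i} = 0$. A dimension count then closes the argument: by (4), $\sum_{i} \dim K_{\lambda_{i}} = \sum_{i} m_{i} = \dim V$, since the characteristic polynomial has degree $\dim V$ and splits over $\C$; therefore the direct sum $\bigoplus_{i} K_{\lambda_{i}}$ exhausts $V$.
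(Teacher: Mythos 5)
Your proof is correct, but note that the paper itself contains no proof of this theorem: it simply states that these results ``are proven in \cite[Section~7.1]{friedberg2003linear}'' and moves on, so the comparison is between your self-contained argument and an outsourced one. Your route is the classical primary-decomposition argument: Bezout's identity for the coprime polynomials $(t-\lambda)^{p}$ and $(t-\mu)^{q}$ handles (2); the computation $(T-\mu I)^{q}x=(\lambda-\mu)^{q}x$ for an eigenvector $x$ handles injectivity in (3); Fitting's decomposition $V=N((T-\lambda I)^{r})\oplus\Ima (T-\lambda I)^{r}$ at the stabilization index $r$, combined with multiplicativity of the characteristic polynomial over a $T$-invariant direct sum, handles (4); and (3) together with (4) gives both directness and, via the dimension count $\sum_{i}\dim K_{\lambda_{i}}=\sum_{i}m_{i}=\dim V$ (the characteristic polynomial splits over $\C$), exhaustion in (5). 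Two things your approach buys: it makes the paper's dependency chain shorter, since everything you invoke (stabilization of kernel chains, Fitting's lemma, splitting over $\C$) is elementary and applied correctly, whereas the paper leaves \cref{thm:gSpaces} as an external dependency; and your argument for (4) actually establishes the sharper equality $\dim K_{\lambda}=m$ rather than the stated inequality --- a strengthening that is not gratuitous, because it is exactly what makes your dimension count in (5) close. I see no gap in the proposal.
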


Hence the above theorem implies $T$ is block diagonal in a basis adapted to the g-spaces.

\begin{definition}
	Let $T$ be a linear operator on finite-dimensional complex vector space $E$ and let $x$ be a generalized eigenvector of $T$ corresponding to the eigenvalue $\lambda$. Suppose that $p$ is the smallest positive integer such that $(T-\lambda I)^{p}(x) = 0$. Then the ordered set
	
	\begin{equation}
		\{ (T-\lambda I)^{p-1}(x),(T-\lambda I)^{p-2}(x), \dotsc , (T-\lambda I)(x), x  \}
	\end{equation}
	
	is called a cycle ($p$-cycle) of generalized eigenvectors for $T$ with eigenvalue $\lambda$. $(T-\lambda I)^{p-1}(x)$ and $x$ are called the initial vector and the end vector of the cycle, respectively. We say that the cycle has length $p$ and $x$ generates a cycle ($p$-cycle) of generalized eigenvectors.
\end{definition}

We first note that the subspace spanned by a $p$-cycle has dimension $p$. We also observe that a given cycle of generalized eigenvectors generated by $x$ with eigenvalue $\lambda$ lie in $K_\lambda$. Also $T$ restricted to this cycle has the following matrix representation:

\begin{equation}
\begin{pmatrix}
\lambda & 1 &  &  &  \\ 
 & \lambda & \ddots &  & 0 \\ 
 &  & \ddots & 1 &  \\ 
 &  &  & \lambda & 1 \\ 
 & 0 &  &  & \lambda
\end{pmatrix} 
\end{equation}

We denote $U_{\lambda} = T - \lambda I$ and if $\lambda$ is fixed we remove the subscript and refer to $U_{\lambda} = U$.

%

Suppose $T$ is a real linear operator and let $\lambda$ be an eigenvalue with non-zero imaginary part. Suppose $x$ generates a cycle of generalized eigenvectors of length $p$ with eigenvalue $\lambda$ whose end vector has linearly independent real and imaginary parts. Then it follows that $\conj{x}$ generates a cycle of generalized eigenvectors of length $p$ with eigenvalue $\conj{\lambda}$ which is linearly independent of the cycle generated by $x$. We denote the real subspace generated by these vectors as $K_{\lambda \oplus \conj{\lambda}}$ and call this the \emph{real subspace} spanned by the cycle generated by $x$. If $\lambda \in \R$, then this real subspace is just $K_\lambda$.

Knowledge of the Jordan canonical form is unnecessary for our derivation. Although for readers familiar with it, note that $K_{\lambda} \simeq N_{1} \oplus \dfrac{N_{2}}{N_{1}} \oplus \dotsc \oplus \dfrac{N_{p}}{N_{p-1}}$ where $N_i = \ker U_\lambda^i$. This shows the non-uniqueness of a given Jordan canonical basis. We will use this fact to find a Jordan canonical basis for a self-adjoint operator adapted to the scalar product.

In order to prove the uniqueness of the metric-Jordan canonical form of a self-adjoint operator we will need some theory on symmetric bilinear forms. First a \emph{diagonal representation} of a symmetric bilinear form is a basis in which the matrix representation of the form is diagonal.

\begin{theorem}[Sylvester's Law of Inertia]
	For any symmetric bilinear form defined over a real vector space, the number of positive diagonal entries and negative diagonal entries in a diagonal representation is independent of the diagonal representation.
	
	For any symmetric bilinear form defined over a complex vector space, the number of non-zero diagonal entries in a diagonal representation is independent of the diagonal representation.
\end{theorem}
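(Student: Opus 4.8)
The plan is to prove each count invariant by reducing it to a quantity that is manifestly independent of the chosen diagonalizing basis. First I would record the elementary fact that if $e_1,\dotsc,e_n$ and $f_1,\dotsc,f_n$ are two bases in which the form is diagonal, with matrix representations $A$ and $A'$, then writing the change of basis as $f_j=\sum_i P_{ij}e_i$ gives $A'=P^{\top}AP$ with $P$ invertible. Since congruence by an invertible matrix preserves rank, and the rank of a diagonal matrix is exactly its number of non-zero diagonal entries, this number equals $\operatorname{rank}A$ in every diagonal representation. This already proves the complex statement in full, and in the real case it fixes the number $z$ of zero diagonal entries (equivalently $p+q=\operatorname{rank}A$).

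For the real case it remains to fix the number $p$ of positive and $q$ of negative diagonal entries separately. Fixing one diagonal representation $e_1,\dotsc,e_n$, I would group the basis vectors by the sign of $\bp{e_i,e_i}$: let $V_+$ be the span of those $e_i$ with $\bp{e_i,e_i}>0$ (of dimension $p$), let $V_-$ be the span of those with $\bp{e_i,e_i}<0$ (dimension $q$), and let $V_0$ be the span of those with $\bp{e_i,e_i}=0$ (dimension $z$). Because the basis is orthogonal, a non-zero $v=\sum c_i e_i\in V_+$ has $\bp{v,v}=\sum c_i^2\bp{e_i,e_i}>0$, so the form is positive definite on $V_+$; by the same computation it is negative semi-definite on $V_-\obot V_0$. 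The key claim is then that $p$ equals the maximal dimension, call it $m$, of a subspace on which the form is positive definite --- a number that refers only to the form itself and not to any basis.

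To prove this claim, the inequality $p\le m$ is witnessed by $V_+$ itself. For the reverse inequality $m\le p$, let $W$ be any subspace on which the form is positive definite and consider $W\cap(V_-\obot V_0)$: a vector $w$ in this intersection satisfies $\bp{w,w}>0$ unless $w=0$ (because $w\in W$), yet also $\bp{w,w}\le 0$ (because $w\in V_-\obot V_0$), forcing $w=0$. Hence $\dim W+(q+z)=\dim W+\dim(V_-\obot V_0)\le n$, so $\dim W\le n-q-z=p$, giving $m\le p$. Thus $p$ is the claimed invariant, and running the identical argument with negative-definite subspaces (or simply using $q=\operatorname{rank}A-p$) fixes $q$ as well. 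I expect the only step requiring genuine care to be the verification that the form is definite on $V_+$ and semi-definite on $V_-\obot V_0$, which is precisely where the orthogonality of the diagonalizing basis enters; everything else is routine linear-algebra bookkeeping and should present no serious obstacle.
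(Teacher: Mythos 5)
Your proof is correct, but note that it is not ``the same approach as the paper'' for a simple reason: the paper does not prove this theorem at all --- its proof is a pointer to Theorem~6.38 of Friedberg--Insel--Spence and Theorems~6.6/6.8 of Jacobson. Your argument is the standard self-contained one, and every step checks out: the complex statement and the count of zeros in the real case follow from invariance of rank under congruence (with the correct computation $A' = P^{\top}AP$ for a change of diagonalizing basis), and the separate invariance of $p$ and $q$ in the real case follows from your variational characterization of $p$ as the maximal dimension of a subspace on which the form is positive definite, proved via the trivial-intersection argument $W \cap (V_-\obot V_0) = \{0\}$ and the dimension count $\dim W + q + z \leq n$. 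The definiteness of the form on $V_+$ and semi-definiteness on $V_-\obot V_0$ are exactly where orthogonality of the diagonal basis enters, as you say. What your route buys is self-containedness: the paper's uniqueness theorem for the metric-Jordan canonical form leans on Sylvester's law through \cref{lem:cycScalP}, so inlining a proof like yours would make the paper's uniqueness result genuinely free of external dependencies, which is one of the stated goals in the introduction; the cost is roughly a page of routine linear algebra that the cited textbooks already contain, which is presumably why the author chose to cite rather than prove.
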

\begin{proof}
	For the real case, see Theorem~6.38 in \cite{friedberg2003linear} or Theorem~6.8 in \cite{jacobson2012basic}. For the complex case, see Theorem~6.6 in \cite{jacobson2012basic}
\end{proof}

\section{Existence of the metric-Jordan canonical form}

In this section we will show how to obtain the canonical form, culminating in \cref{thm:clasSelfadj}. First we need some properties of self-adjoint operators.

\begin{theorem}[Fundamental Properties of Self-Adjoint Operators]
	Suppose V is a scalar product space and T is a self-adjoint operator on V. Suppose $H \subseteq V$ is an invariant subspace of T. Then
	
	\begin{enumerate}
		\item $T(H^{\perp}) \subseteq H^{\perp}$, i.e. $H^{\perp}$ is an invariant subspace of T. \label{thm:saInvSub}
		\item $(\operatorname{ker} T)^{\perp} = \operatorname{range} T$ and $V = \operatorname{ker} T \obot \operatorname{range} T$ iff either $\operatorname{ker} T$ or $\operatorname{range} T$ is a non-degenerate subspace \label{thm:kerAnih}
		\item Any polynomial in $T$ is self-adjoint.
	\end{enumerate}
\end{theorem}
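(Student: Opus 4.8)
The plan is to treat the three parts in increasing order of difficulty, since (3) is immediate, (1) is a one-line computation, and (2) carries all the real content. For part (1) I would take an arbitrary $x \in H^{\perp}$ and verify directly that $Tx \in H^{\perp}$: given any $y \in H$, self-adjointness gives $\bp{Tx,y} = \bp{x,Ty}$; since $H$ is $T$-invariant we have $Ty \in H$, and since $x \in H^{\perp}$ the right-hand side vanishes, so $\bp{Tx,y} = 0$ for every $y \in H$. Part (3) I would dispatch by induction, observing $\bp{T^{2}x,y} = \bp{Tx,Ty} = \bp{x,T^{2}y}$ and iterating; closure of self-adjoint operators under scalar multiples and sums (with $I$ self-adjoint) then covers every polynomial, as was already noted in the preliminaries.

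The substance is part (2), which I would split into two claims. First, the identity $(\operatorname{ker} T)^{\perp} = \operatorname{range} T$. I would establish $(\operatorname{range} T)^{\perp} = \operatorname{ker} T$ directly: $x \perp \operatorname{range} T$ iff $\bp{x,Ty}=0$ for all $y$, iff $\bp{Tx,y}=0$ for all $y$ by self-adjointness, iff $Tx = 0$ by non-degeneracy of the scalar product. Taking orthogonal complements and using $H^{\perp\perp} = H$ — which holds in any finite-dimensional non-degenerate scalar product space because $\dim H + \dim H^{\perp} = \dim V$ forces $\dim H^{\perp\perp} = \dim H$, together with the trivial inclusion $H \subseteq H^{\perp\perp}$ — then yields $(\operatorname{ker} T)^{\perp} = \operatorname{range} T$.

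Second, the equivalence. Since $\operatorname{range} T = (\operatorname{ker} T)^{\perp}$, orthogonality of the summands is automatic, so the decomposition $V = \operatorname{ker} T \obot \operatorname{range} T$ holds exactly when $\operatorname{ker} T \cap (\operatorname{ker} T)^{\perp} = \{0\}$; by the complementation criterion cited from \cite[P.~49]{barrett1983semi} this is equivalent to the restriction of the scalar product to $\operatorname{ker} T$ being non-degenerate. To reach the stated ``either\ldots or'' form I would note that a subspace $H$ and its complement $H^{\perp}$ share the same radical, namely $H \cap H^{\perp} = H^{\perp} \cap H^{\perp\perp}$; hence $\operatorname{ker} T$ is non-degenerate iff $\operatorname{range} T = (\operatorname{ker} T)^{\perp}$ is non-degenerate, and either condition is equivalent to the orthogonal direct-sum decomposition.

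I expect the main obstacle to be keeping the indefinite setting honest. Both the double-complement identity $H^{\perp\perp} = H$ and the criterion for $V = H \oplus H^{\perp}$ genuinely require non-degeneracy of the ambient form — for a degenerate subspace $H$ the complement need not split off — so the care lies in invoking exactly the dimension count and the cited complementation result, and in the symmetric-radical argument, rather than importing positive-definite intuition where orthogonal complements are always complementary.
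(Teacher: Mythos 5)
Your proposal is correct, and since the paper's own proof of this theorem is simply ``The proofs are immediate,'' your argument supplies exactly the standard details the author intends: the one-line computation for (1), closure under sums and powers for (3), and for (2) the identity $(\operatorname{range} T)^{\perp} = \ker T$ upgraded via $H^{\perp\perp} = H$ and the equality of radicals $\ker T \cap (\ker T)^{\perp} = \operatorname{range} T \cap (\operatorname{range} T)^{\perp}$. Your attention to the indefinite setting --- using the dimension count $\dim H + \dim H^{\perp} = \dim V$ (valid even for degenerate $H$) rather than positive-definite intuition, and invoking the cited complementation criterion only where non-degeneracy of the subspace is actually in play --- is precisely the care this statement requires.
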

\begin{proof}
	The proofs are immediate.
\end{proof}
\begin{remark}
	The first statement of the above theorem also holds for unitary operators on $V$, as noted by O'Neil in \cite[Section~9.4]{barrett1983semi}.
\end{remark}

The idea behind obtaining the canonical forms is as follows. First suppose that $T$ is a self-adjoint operator on a scalar product space. When $E$ is a Euclidean space, one can easily diagonalize $T$ using property~\ref{thm:saInvSub} and the fact that self-adjoint operators in Euclidean space have real eigenvalues. Indeed, after one finds a single eigenvector $v$, one can use property~\ref{thm:saInvSub} to deduce that the subspace orthogonal to $v$ must be $T$-invariant. Since in Euclidean space the subspace orthogonal to $v$ must be complementary to $v$, one can repeat this procedure to find a basis of eigenvectors for $T$.

For general indefinite scalar products, our goal will be to find a cycle of generalized eigenvectors for $T$ such that they span a non-degenerate subspace. Then as in the Euclidean case, we can use property~\ref{thm:saInvSub} to inductively build a Jordan canonical basis for $T$. We will now develop a series of lemmas to show that any self-adjoint operator admits a cycle of generalized eigenvectors whose span is a non-degenerate subspace. Then we will combine these lemmas in \cref{thm:clasSelfadj} which shows how to obtain a Jordan canonical basis for $T$ which also puts the scalar product in a canonical form.

The following theorem starts us off by showing that the g-spaces of a self-adjoint operator are always non-degenerate, in fact it says even more:

\begin{lemma} \label{lem:gSpaceNonDegen}
	Suppose $V^{\C}$ is a scalar product space and T is a real self-adjoint operator on $V^{\C}$. Let $\lambda$ and $\mu$ be distinct eigenvalues of T, then $K_{\lambda} \perp K_{\mu}$, hence if $\lambda_{1},...,\lambda_{k}$ is the spectrum of T then by \cref{thm:gSpaces}, $V^{\C} = \bigobot \limits_{i=1}^{k} K_{\lambda_{i}}$.
	
	As an immediate corollary we find that each generalized eigenspace is a non-degenerate subspace.
\end{lemma}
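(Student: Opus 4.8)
The plan is to push self-adjointness through the \emph{symmetric} complexified pairing and to combine it with the bijectivity statement in \cref{thm:gSpaces}. The point that makes this work --- and the one I would flag as the only genuine subtlety --- is that $\bp{\cdot,\cdot}$ on $V^\C$ is a symmetric bilinear form, not a Hermitian form, so scalars pass across it without conjugation. Consequently $T-\mu I$ is self-adjoint for \emph{every} $\mu\in\C$, and therefore so is each power $(T-\mu I)^q$, being a polynomial in $T$ with (possibly complex) coefficients. In the Hermitian setting this would fail for non-real $\mu$, which is exactly why here a self-adjoint operator may have non-real eigenvalues while distinct generalized eigenspaces nonetheless stay orthogonal; the bilinear symmetry is what salvages the classical orthogonality argument.

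First I would prove $K_\lambda \perp K_\mu$. Fix $y\in K_\mu$ and choose $q$ with $(T-\mu I)^q y = 0$. For an arbitrary $x\in K_\lambda$, self-adjointness of $(T-\mu I)^q$ gives
\begin{equation}
\bp{{(T-\mu I)^q x},y} = \bp{x,{(T-\mu I)^q y}} = \bp{x,0} = 0 .
\end{equation}
Now, since $\mu\neq\lambda$, part~(3) of \cref{thm:gSpaces} (with the roles of $\lambda$ and $\mu$ interchanged) says $(T-\mu I)|_{K_\lambda}$ is a bijection of $K_\lambda$, whence $(T-\mu I)^q$ maps $K_\lambda$ onto itself. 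Thus as $x$ runs over $K_\lambda$ the vector $(T-\mu I)^q x$ runs over all of $K_\lambda$, so the displayed identity forces $\bp{z,y}=0$ for every $z\in K_\lambda$. As $y\in K_\mu$ was arbitrary, $K_\lambda\perp K_\mu$. The orthogonal direct sum is then immediate: part~(5) of \cref{thm:gSpaces} already gives $V^\C=\bigoplus_{i=1}^{k}K_{\lambda_i}$, and the orthogonality just established upgrades this to $V^\C=\bigobot_{i=1}^{k}K_{\lambda_i}$.

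For the corollary I would argue from non-degeneracy of the ambient form. Suppose $x\in K_{\lambda_i}$ is orthogonal to all of $K_{\lambda_i}$. Decomposing an arbitrary $w\in V^\C$ as $w=\sum_j w_j$ with $w_j\in K_{\lambda_j}$, the cross-orthogonality kills every term with $j\neq i$ while the hypothesis kills the $j=i$ term, so $\bp{x,w}=0$ for all $w\in V^\C$. Non-degeneracy of the scalar product on $V^\C$ then yields $x=0$, so the restriction of the form to $K_{\lambda_i}$ is non-degenerate. Apart from the bilinear-versus-Hermitian observation above, every step reduces to routine bookkeeping with \cref{thm:gSpaces}.
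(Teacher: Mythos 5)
Your proof is correct and takes essentially the same route as the paper: where the paper annihilates $x$ with $U_\lambda^p$ and pulls $y$ back through the bijection $U_\lambda|_{K_\mu}$, you annihilate $y$ with $(T-\mu I)^q$ and use surjectivity of $(T-\mu I)^q|_{K_\lambda}$, which is the identical argument with the roles of $\lambda$ and $\mu$ interchanged, resting on the same two ingredients (self-adjointness of polynomials in $T$ under the symmetric bilinear form, and part~3 of the generalized-eigenspace theorem). Your direct verification of non-degeneracy from the orthogonal decomposition is just a spelled-out version of the paper's closing sentence.
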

\begin{proof}
	Suppose $x \in K_{\lambda}$ and $y \in K_{\mu}$. Suppose $U^{p}_{\lambda}(x)=0$, since $\mu \neq \lambda$ \cref{thm:gSpaces} says that $U_{\lambda}$ is a bijection when restricted to $K_{\mu}$, hence there exists a $z \in K_{\mu}$ such that $y = U_{\lambda}^{p}(z)$. Since $U_{\lambda}^{p}$ is self-adjoint, property~\ref{thm:kerAnih} implies that $\bp{x,y} = 0$.
	
%
	
	Thus $K_{\lambda} \perp K_{\mu}$. As a consequence of this and \cref{thm:gSpaces} we see that $E = K_{\lambda} \oplus K_{\lambda}^{\perp}$, hence $K_{\lambda}$ is non-degenerate.
\end{proof}

Suppose $V$ is a scalar product space and $T$ is a self-adjoint operator on $V$. Suppose $\lambda$ is an eigenvalue of $T$ and $x \in K_{\lambda}$ generates a cycle of generalized eigenvectors of $T$ of length $p$. Let $U = (T-\lambda I)$ and $v_{i} = U^{p-i} x$ for $i \in \{1,...,p\}$. Then observe that 

\begin{align}
	\scalprod{v_{i}}{v_{j}} & = \scalprod{U^{p-i} x}{U^{p-j} x} \label{eqn:reducProds} \\
	& = \scalprod{U^{2p -i -j} x}{x}
\end{align}

If $i+j \leq p$ then by property~\ref{thm:kerAnih} and the fact that $U^{p}x = 0$ the above equation implies that $\scalprod{v_{i}}{v_{j}} = 0$. If $i + j > p$ then the above equation implies that $\scalprod{v_{i}}{v_{j}}$ only depends on the sum $i+j$. Thus in a cycle of length $p$ there are only $p$ scalar products that are variable and the above equation shows us that we only need to deal with the products $\scalprod{v_{i}}{v_{p}}$. The following lemma will show that for every g-space we can always find a generator of a cycle such that $\scalprod{v_{1}}{v_{p}} \neq 0$.

\begin{lemma} \label{lem:nondgenTsel}
	Suppose $V$ is a scalar product space and $T$ is a self-adjoint operator on $V$. Fix an eigenvalue $\lambda$ of $T$ and let $U = (T-\lambda I)|_{K_{\lambda}}$. Suppose $k \geq 0$ satisfies $U^{k} \neq 0$, then there exists an $x \in K_{\lambda}$ such that $\scalprod{U^{k} x}{x} \neq 0$.
\end{lemma}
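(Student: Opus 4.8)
The plan is to recast the statement in terms of the symmetric bilinear form naturally attached to $U^k$, and then reduce it to the non-degeneracy of the generalized eigenspace. First I would observe that $U = (T-\lambda I)|_{K_\lambda}$ is itself self-adjoint: for $x,y \in K_\lambda$ we have $\scalprod{Ux}{y} = \scalprod{Tx}{y} - \lambda\scalprod{x}{y} = \scalprod{x}{Ty} - \lambda\scalprod{x}{y} = \scalprod{x}{Uy}$, where the scalar $\lambda$ factors out symmetrically precisely because $\scalprod{\cdot}{\cdot}$ is a symmetric bilinear form rather than a Hermitian one. Consequently $U^k = (T-\lambda I)^k|_{K_\lambda}$ is a polynomial in $T$ and hence self-adjoint, so the form $B(x,y) := \scalprod{U^k x}{y}$ is a symmetric bilinear form on $K_\lambda$; indeed $B(y,x) = \scalprod{U^k y}{x} = \scalprod{x}{U^k y} = \scalprod{U^k x}{y} = B(x,y)$.

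With this in hand the argument runs by contradiction. Suppose no generator works, i.e. $\scalprod{U^k x}{x} = 0$ for every $x \in K_\lambda$, which is to say $B(x,x) = 0$ for all $x$. Since $B$ is symmetric and the underlying field is $\R$ or $\C$ (hence of characteristic zero), the polarization identity $B(x,y) = \tfrac{1}{2}\left(B(x+y,x+y) - B(x,x) - B(y,y)\right)$ forces $B \equiv 0$, that is $\scalprod{U^k x}{y} = 0$ for all $x,y \in K_\lambda$. Because $U$ maps $K_\lambda$ into itself, the vector $U^k x$ again lies in $K_\lambda$, so the vanishing of $B$ says that every vector of the form $U^k x$ is orthogonal to all of $K_\lambda$.

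The concluding step invokes the non-degeneracy of the generalized eigenspace, which is exactly the corollary to \cref{lem:gSpaceNonDegen}: the restriction of the scalar product to $K_\lambda$ is non-degenerate. Hence $U^k x$ being orthogonal to all of $K_\lambda$ forces $U^k x = 0$ for every $x \in K_\lambda$, i.e. $U^k = 0$ on $K_\lambda$, contradicting the hypothesis $U^k \neq 0$. I expect the only real subtlety to be bookkeeping rather than genuine difficulty: one must be careful that $U^k$ is truly self-adjoint (so that $B$ is symmetric and polarization is legitimate) and that the non-degeneracy of $K_\lambda$ is indeed in force for the final step. It is worth noting that the argument is uniform in $k$, including the boundary case $k=0$, where $U^0 = I$ and the statement reduces to the fact that a non-degenerate space contains a non-null vector.
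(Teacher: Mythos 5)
Your proof is correct and follows essentially the same route as the paper's: assume $\scalprod{U^k x}{x} = 0$ for all $x$, use self-adjointness of $U^k$ to get a symmetric bilinear form, kill it entirely via polarization, and then contradict $U^k \neq 0$ using the non-degeneracy of $K_\lambda$ from \cref{lem:gSpaceNonDegen}. The extra details you supply (self-adjointness of $U$, the fact that $U^k x \in K_\lambda$ so non-degeneracy of the restricted form applies) are points the paper leaves implicit, and they are handled correctly.
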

\begin{proof}
	Suppose to the contrary that $\scalprod{U^{k}(x)}{x} = 0$ for all $x \in K_{\lambda}$. Define a bilinear form $[\cdot,\cdot ] : K_{\lambda} \times K_{\lambda} \rightarrow \Fi$ by $[x,y] = \scalprod{U^{k}(x)}{y}$. Since $U^{k}$ is self-adjoint, $[\cdot,\cdot ]$ is a symmetric bilinear form. Thus by the polarization identity, it follows that for any $x,y \in K_{\lambda}$
	
	\begin{equation}
		0 = [x,y] = \scalprod{U^{k}(x)}{y}
	\end{equation}
	
	
	Now, since $U^k \neq 0$, there exists an $x \in K_{\lambda}$ such that $U^k x \neq 0$. But by \cref{lem:gSpaceNonDegen} the scalar product is non-degenerate, hence the above equation implies that $U^k x = 0$, a contradiction. Hence the conclusion holds.
\end{proof}

Assuming $\scalprod{v_{1}}{v_{p}} \neq 0$, the following proposition shows how to adapt the cycle so that any other remaining scalar products are zero.

\begin{lemma} \label{prop:adaptCyc}
	Suppose the $v_{i}$ are as defined as above for a cycle of generalized eigenvectors of $T$ generated by $x \in K_{\lambda}$. Let $H \subseteq K_{\lambda}$ be the subspace corresponding to the cycle generated by $x$. If $\scalprod{v_{1}}{v_{p}} \neq 0$, then we can choose an $x' \in H$ such that $x'$ generates a cycle of generalized eigenvectors $v'_{i} = U^{p-i} x'$ of length $p$ spanning $H$, such that $v_1,\dotsc,v_p$ forms a skew-normal sequence of sign $\sgn \scalprod{v_{1}}{v_{p}}$ if $\lambda \in \R$ or $1$ if $\lambda \in \C \setminus \R$.
\end{lemma}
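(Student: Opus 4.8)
The plan is to search for the new generator as a polynomial in $U$ applied to $x$, say $x' = q(U)x = \sum_{k=0}^{p-1} c_k U^k x$ with undetermined scalars $c_0,\dotsc,c_{p-1}$ and $c_0 \neq 0$. Every such $x'$ lies in $H = \spa{v_1,\dotsc,v_p}$, and since $U^{p-1}x' = c_0 U^{p-1}x = c_0 v_1 \neq 0$ it again generates a $p$-cycle $v'_i = U^{p-i}x' = q(U)v_i$ that spans $H$. Because $q(U)$ is a polynomial in $T$ it is self-adjoint, so
\[
\scalprod{v'_i}{v'_j} = \scalprod{U^{2p-i-j}q(U)^2 x}{x}.
\]
Exactly as in the computation preceding the lemma this depends only on $i+j$, and moving the self-adjoint factor $U^{2p-i-j}$ onto the right shows it vanishes whenever $i+j \le p$ (the exponent is then $\ge p$ and $U^{\ge p}x = 0$). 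Hence $\scalprod{v'_i}{v'_j} = a_{2p-i-j}$, where $a_n := \scalprod{U^n x'}{x'}$ and $a_n = 0$ for $n \ge p$, so it suffices to choose the $c_k$ so that the $p$ anti-diagonal values satisfy $a_n = \varepsilon\,\delta_{n,p-1}$ for $0 \le n \le p-1$; this is precisely the skew-normal condition of sign $\varepsilon$.

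Next I would expand these values in terms of the moments $\mu_j := \scalprod{U^j x}{x}$. Writing $q(t)^2 = \sum_m b_m t^m$ with $b_0 = c_0^2$, and using $\mu_j = 0$ for $j \ge p$, one gets
\[
a_n = \sum_{m=0}^{p-1-n} b_m\,\mu_{n+m}, \qquad 0 \le n \le p-1.
\]
The key fact is that $\mu_{p-1} = \scalprod{U^{p-1}x}{x} = \scalprod{v_1}{v_p} \neq 0$ by hypothesis, so in each equation $a_n = \varepsilon\,\delta_{n,p-1}$ the top coefficient is attached to $\mu_{p-1}$. Indeed the equation for $n = p-1$ reads $a_{p-1} = c_0^2\,\mu_{p-1}$, and for each smaller $n$ it takes the form $b_{p-1-n}\mu_{p-1} + (\text{terms in } b_0,\dotsc,b_{p-2-n}) = 0$, where $b_{p-1-n} = 2c_0 c_{p-1-n} + (\text{products of } c_0,\dotsc,c_{p-2-n})$.

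This exhibits the system as triangular, to be solved from $n = p-1$ downward. The equation for $n=p-1$ is the only genuinely nonlinear one: it requires $c_0^2 = \varepsilon/\mu_{p-1}$, and this is the crux of the argument and the source of the sign restriction. Over $\C$ a square root always exists, so I would take $\varepsilon = 1$ and any root $c_0$; over $\R$ the right-hand side must be positive, which forces $\varepsilon = \sgn\scalprod{v_1}{v_p} = \sgn\mu_{p-1}$, after which $c_0 = 1/\sqrt{|\mu_{p-1}|}$ is real. With $c_0 \neq 0$ fixed, each subsequent equation determines $c_{p-1-n}$ uniquely from the already-chosen $c_0,\dotsc,c_{p-2-n}$, since its coefficient $2c_0\mu_{p-1}$ is nonzero. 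This produces the desired $q$, hence $x'$, and the main obstacle is concentrated entirely in the scalar square-root equation for $c_0$ — everything else is routine back-substitution.
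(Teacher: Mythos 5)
Your proof is correct and is essentially the paper's own argument in different bookkeeping: the paper also produces the new generator as a polynomial in $U$ applied to $x$, organized as an induction --- first the square-root rescaling of $v_p$ (the step that forces $\varepsilon = \sgn\scalprod{v_1}{v_p}$ in the real case and permits $\varepsilon = 1$ in the complex case), then successive corrections $v_p \mapsto v_p + a\, v_{p-k+1}$ with $a = -\scalprod{v_k}{v_p}/(2\scalprod{v_1}{v_p})$, which is exactly the back-substitution of your triangular system. Both arguments rest on the same two facts: products within a cycle depend only on $i+j$ and vanish for $i+j \le p$, and the nonzero pivot $\scalprod{v_1}{v_p}$ makes each successive condition linear in exactly one new coefficient.
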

\begin{proof}
	Suppose first that $\lambda \in \C \setminus \R$, then let $v'_{p} = \bp{v_{1},v_{p}}^{-\frac{1}{2}} v_{p}$ where any square root is fine. Then observe that:
	
	\begin{align}
		\bp{v'_{1},v'_{p}} & = \bp{v_{1},v_{p}}^{-1} \bp{v_{1},v_{p}} \\
		& = 1
	\end{align}
	
	If $\lambda \in \R$ then let $v'_{p} = |\bp{v_{1},v_{p}}|^{-\frac{1}{2}} v_{p}$. Then observe that:

	\begin{align}
		\bp{v'_{1},v'_{p}} & = |\bp{v_{1},v_{p}}|^{-1} \bp{v_{1},v_{p}} \\
		& = \pm 1
	\end{align}
	
	Thus we can assume that $|\bp{v_{1},v_{p}}| = 1$. Inductively suppose that $|\bp{v_{1},v_{p}}| = 1$ and that $\bp{v_{i},v_{p}} = 0$ for $2 \leq i \leq k-1$ for some $k \geq 2$.
	
	Let $v'_{p} = v_{p} + a v_{p-k+1}$ where $a$ is to be determined. Now for $i \in \{1,...,p\}$
	
	\begin{align}
		v'_{i} & = U^{p-i} v'_{p} \\
		& = U^{p-i}v_{p} + a U^{p-i}v_{p-k+1} \\
		& = v_{i} + a v_{i-k+1}
	\end{align}
	
	Observe that $v'_{i} = v_{i}$ if $i-k+1 \leq 0$, i.e. $i \leq k - 1$. The above equation also shows that each $v'_{i} \in H$ and since $v'_{1} = v_{1} \neq 0$ the cycle generated by $v'_{p}$ has length $p$ and thus forms a basis for $H$. Now using the fact that $\bp{v_{i}, v_{j}}$ only depends on $i+j$, we find that:
	
	\begin{align}
		\bp{v'_{k}, v'_{p}} & = \scalprod{v_{k} + a v_{1}}{ v_{p} + a v_{p-k+1}} \\
		& = \scalprod{v_{k}}{v_{p}} + a \scalprod{v_{k}}{v_{p-k+1}} + a \scalprod{v_{1}}{ v_{p}} + a^{2} \scalprod{ v_{1}}{ v_{p-k+1}} \\
		& = \scalprod{v_{k}}{v_{p}} + 2a \scalprod{v_{1}}{ v_{p}}
	\end{align}
	
	where $\scalprod{ v_{1}}{ v_{p-k+1}} = 0$ since $p-k+2 \leq p$. Thus let $a = - \dfrac{\scalprod{v_{k}}{v_{p}}}{2 \scalprod{v_{1}}{ v_{p}}}$ which forces $\bp{v'_{k}, v'_{p}} = 0$.
	
	Now suppose $1 \leq i < k$, then note that $v'_{i} = v_{i}$, thus
	
	\begin{align}
		\bp{v'_{i}, v'_{p}} & = \scalprod{v_{i}}{ v_{p} + a v_{p-k+1}} \\
		& = \scalprod{v_{i}}{v_{p}} + a \scalprod{v_{i}}{v_{p-k+1}} \\
		& = \scalprod{v_{i}}{v_{p}}
	\end{align}
	
	where $\scalprod{v_{i}}{v_{p-k+1}} = 0$  follows from the induction hypothesis in conjunction with the fact that because $k \geq 2$, we have that $p+i-k+1 \leq p+k-1$ and $k \leq p$ implies $p+i-k+1 \neq 1$. Thus $v'_{p}$ satisfies the induction hypothesis and after relabeling $v'_{p}$ as $v_{p}$ we can apply the induction hypothesis again until $k = p$ in which case we will have proven the statement.
\end{proof}

Suppose $x$ generates a cycle of generalized eigenvectors satisfying the conclusions of the above proposition and let $z_{i} = U^{p-i} x$. Then by \cref{eqn:reducProds} we find that the only non-zero scalar products are $\bp{z_{i},z_{j}} = \bp{z_{1},z_{p}}$ where $i+j = p + 1$. Thus we say a given cycle of generalized eigenvectors with eigenvalue $\lambda$ for a self-adjoint operator are adapted to the scalar product, if they form a skew-normal sequence of sign $\pm 1$ if $\lambda \in \R$ or sign $1$ if $\lambda \in \C \setminus \R$. If $\lambda \in \R$, then $\{z_1,\dotsc,z_p\}$ form a real basis for the Jordan canonical form of $T$. If $\lambda \in \C \setminus \R$ (WLOG we can assume $\Ima(\lambda) > 0$), then we choose a canonical real basis $\{u_{1},v_{1},...,u_{p},v_{p}\}$ for $T$ as follows. Let

\begin{subequations} \label{eq:realSeq}
	\begin{equation} \label{eq:realSeqI}
		u_i = \frac{1}{\sqrt{2}}( z_{i} + \conj{z_{i}})
	\end{equation}
	\begin{equation} \label{eq:realSeqII}
		v_i = \frac{1}{i\sqrt{2}}(z_{i} - \conj{z_{i}})
	\end{equation}
\end{subequations}

Note that

\begin{align}
	\bp{u_i , u_j } & = \frac{1}{2} (\bp{z_i , z_j} + \bp{\conj{z_{i}} , \conj{z_{j}}}) \\
	\bp{v_i , v_j } & = \frac{-1}{2} (\bp{z_i , z_j} + \bp{\conj{z_{i}} , \conj{z_{j}}}) \\
	\bp{u_i , v_j} & = \frac{1}{2 i} (\bp{z_i , z_j} - \bp{\conj{z_{i}} , \conj{z_{j}}}) = 0
\end{align}

It then follows that $\bp{u_i , u_j } = 1 = - \bp{v_i , v_j } $ if $i + j = p + 1$ with all other scalar products zero. Hence $\{u_i\}$ (resp. $\{v_i\}$) form a skew-normal sequence of sign $1$ (resp. $-1$). Now if we set $u_{p+1} = v_{p+1} = 0$, $T$ acts on this basis as follows:

\begin{align}
	T u_i  & = \frac{1}{\sqrt{2}} (\lambda z_{i} + z_{i+1} + \conj{\lambda} \conj{z}_{i} + \conj{z}_{i+1}) \\
	& = \frac{1}{\sqrt{2}} ((a+ i b) z_{i}  + (a - i b) \conj{z}_{i}) + u_{i+1} \\
	& = a u_i + \frac{b}{i\sqrt{2}} (\conj{z}_{i} - z_{i}) + u_{i+1} \\
	& = a u_i -b v_i + u_{i+1}
\end{align}

Similarly

\begin{align}
	T v_i  & = \frac{1}{i \sqrt{2}} (\lambda z_{i} + z_{i+1} - \conj{\lambda} \conj{z}_{i} - \conj{z}_{i+1}) \\
	& = \frac{1}{i\sqrt{2}} ((a+ i b) z_{i}  - (a - i b) \conj{z}_{i}) + v_{i+1} \\
	& = a v_i + \frac{b}{\sqrt{2}} (z_{i} + \conj{z}_{i}) + v_{i+1} \\
	& = a v_i + b u_i + v_{i+1}
\end{align}

In the following proposition we use these basis to show that the real subspace spanned by an adapted $p$-cycle is non-degenerate.

\begin{lemma} \label{prop:indxCyc}
	Suppose $V$ is a real scalar product space and $T$ is a self-adjoint operator on $V$. Let $x$ be a generator for a $p$-cycle of generalized eigenvectors for $T$ with eigenvalue $\lambda$ adapted to the scalar product. Let $z_{i} = U^{p-i}x$, $H$ be the real subspace spanned by this cycle and $\epsilon = \bp{z_{1},z_{p}} = \pm 1$. Then $H$ is non-degenerate.
	
	Furthermore if $\lambda \in \R$, then
	\begin{align}
		\dim H & = p \\
		\ind H & =
		\begin{cases}
			\lfloor \frac{p+1}{2} \rfloor & \text{if } \epsilon = -1 \\
			p- \lfloor \frac{p+1}{2} \rfloor & \text{if } \epsilon = 1
		\end{cases}
	\end{align}
	
	If $\lambda \in \C \setminus \R$, then
	\begin{align}
		\dim H & = 2 p \\
		\ind H & = p
	\end{align}
\end{lemma}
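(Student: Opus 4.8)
The plan is to deduce both cases from \cref{lem:nullBasSig}, which already packages linear independence, non-degeneracy, and the index of a subspace spanned by a skew-normal sequence.

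For $\lambda \in \R$ the work is essentially done. Since the cycle is adapted to the scalar product, the vectors $z_1,\dotsc,z_p$ form a skew-normal sequence of sign $\epsilon = \bp{z_1,z_p}$. Applying \cref{lem:nullBasSig} directly gives that they are linearly independent (so $\dim H = p$), that $H$ is non-degenerate, and that $\ind H$ is exactly the claimed case expression. So nothing beyond invoking the earlier lemma is needed here.

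For $\lambda \in \C \setminus \R$ I would work with the real vectors $u_i, v_i$ defined in \cref{eq:realSeq}, which span the real subspace $H$. The scalar-product computations carried out just before the statement show that $\{u_1,\dotsc,u_p\}$ is a skew-normal sequence of sign $1$, that $\{v_1,\dotsc,v_p\}$ is a skew-normal sequence of sign $-1$, and that $\bp{u_i,v_j}=0$ for all $i,j$. Hence $H = \spa{u_1,\dotsc,u_p} \obot \spa{v_1,\dotsc,v_p}$. Applying \cref{lem:nullBasSig} to each summand shows both are non-degenerate of dimension $p$, with $\ind \spa{u_1,\dotsc,u_p} = p - \lfloor\frac{p+1}{2}\rfloor$ and $\ind \spa{v_1,\dotsc,v_p} = \lfloor\frac{p+1}{2}\rfloor$. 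Summing gives $\dim H = 2p$ and $\ind H = p$, as claimed.

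The only points needing care — and I expect these to be the main, though mild, obstacle — are confirming that the two skew-normal families are jointly linearly independent (so that the orthogonal sum genuinely has dimension $2p$) and that non-degeneracy and the index behave additively across the decomposition $H = \spa{u_1,\dotsc,u_p} \obot \spa{v_1,\dotsc,v_p}$. For independence I would argue that any vector in the intersection $\spa{u_1,\dotsc,u_p} \cap \spa{v_1,\dotsc,v_p}$ lies in the radical of the non-degenerate subspace $\spa{v_1,\dotsc,v_p}$ (using $\bp{u_i,v_j}=0$), hence is zero; non-degeneracy of $H$ follows by the same radical argument applied to a general element $w = w_u + w_v$, and additivity of the index follows because concatenating orthogonal diagonalizing bases of the two summands diagonalizes the form on $H$.
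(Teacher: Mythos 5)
Your proposal is correct and follows essentially the same route as the paper: the real case is a direct application of \cref{lem:nullBasSig} to $z_1,\dotsc,z_p$, and the complex case applies \cref{lem:nullBasSig} to the sequences $u_1,\dotsc,u_p$ and $v_1,\dotsc,v_p$ from \cref{eq:realSeq}, using the orthogonality $\bp{u_i,v_j}=0$ computed before the statement. The only difference is that you spell out the joint linear independence, non-degeneracy, and index additivity of the orthogonal decomposition, details the paper leaves implicit; your radical argument for these is sound.
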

\begin{proof}
	If $\lambda \in \R$, then the result follows by \cref{lem:nullBasSig} applied to $z_1,\dotsc, z_p$. If $\lambda \in \C \setminus \R$, consider the real vectors $\{u_{1},v_{1},...,u_{p},v_{p}\}$ defined in \cref{eq:realSeqI,eq:realSeqII}. The result follows by \cref{lem:nullBasSig} applied to the sequence $u_1,\dotsc, u_p$ and then to $v_1,\dotsc, v_p$.
\end{proof}

The following theorem is from \cite[P.~260-261]{barrett1983semi}.
\begin{theorem}[Existence of the metric-Jordan canonical form \cite{barrett1983semi}] \label{thm:clasSelfadj}
	A linear operator T on a scalar product space V is self adjoint if and only if $V = \bigobot\limits_{i=1}^{k} V_{i}$ (hence each $V_{i}$ is non-degenerate) where each subspace $V_{i}$ is T-invariant and $T|_{V_{i}}$ has one of the following forms:
	
	\begin{equation}
	\begin{pmatrix}
	\lambda & 1 &  &  &  \\ 
	 & \lambda & \ddots &  & 0 \\ 
	 &  & \ddots & 1 &  \\ 
	 &  &  & \lambda & 1 \\ 
	 & 0 &  &  & \lambda
	\end{pmatrix} 
	\end{equation}
	
	relative to a skew-normal sequence $\{v_{1},...,v_{p}\}$ with all scalar products zero except $\bp{v_{i},v_{j}} = \varepsilon = \pm 1$ when $i+j = p+1$, or
	
	\begin{equation}
		\begin{pmatrix}
		a & b & 1 & 0 &  &  &  &  \\ 
		-b & a & 0 & 1 &  &  & 0 &  \\ 
		 &  &  &  &  &  &  &  \\ 
		 &  &  & \ddots &  &  &  &  \\ 
		 &  &  &  & a & b & 1 & 0 \\ 
		 &  &  &  & -b & a & 0 & 1 \\ 
		 & 0 &  &  &  &  & a & b \\ 
		 &  &  &  &  &  & -b & a
		\end{pmatrix}	
	\end{equation}
	
	relative to a basis $\{u_{1},v_{1},...,u_{p},v_{p}\}$ with all scalar products zero except $\bp{u_{i},u_{j}} = 1 = -\bp{v_{i},v_{j}}$ if $i+j = p+1$.
	
	The index and dimension of $V_{i}$ is determined by the blocks $T|_{V_{i}}$ due to \cref{prop:indxCyc}, hence we must have $\ind V = \sum\limits_{i=1}^{k} \ind V_{i}$ and $n = \sum\limits_{i=1}^{k} \dim V_{i}$.
\end{theorem}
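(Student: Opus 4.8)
The plan is to prove the two implications separately, treating the reverse (``if'') direction as a routine verification and reserving the real work for the forward (``only if'') direction, which I would establish by induction on $\dim V$, using the lemmas built up above to peel off one block at a time.

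For the reverse direction, suppose $V = \bigobot_{i=1}^{k} V_i$ with each $T|_{V_i}$ of one of the two listed matrix forms relative to the stated bases. Since the decomposition is orthogonal, the condition $\bp{Tx,y} = \bp{x,Ty}$ for all $x,y \in V$ reduces to checking self-adjointness of $T|_{V_i}$ on each $V_i$ separately, the cross terms vanishing because $V_i \perp V_j$. On a real Jordan block, writing $U = T - \lambda I$ so that $U$ sends each $v_\ell$ to $v_{\ell-1}$ with $v_0 = 0$, self-adjointness of $T$ is equivalent to that of $U$, and both $\bp{Uv_i,v_j}$ and $\bp{v_i,Uv_j}$ equal $\varepsilon$ precisely when $i+j = p+2$ and vanish otherwise; on a complex block the analogous check against the metric $\bp{u_i,u_j} = 1 = -\bp{v_i,v_j}$ (for $i+j=p+1$) is the same short computation. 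Hence $T$ is self-adjoint.

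For the forward direction, assume $T$ is self-adjoint and induct on $\dim V$, the zero-dimensional case being trivial. First I would complexify and pick an eigenvalue $\lambda$ of $T$ on $V^{\C}$; set $U = (T-\lambda I)|_{K_\lambda}$ and let $p$ be the maximal cycle length in $K_\lambda$, i.e. the least $p$ with $U^{p} = 0$, so that $U^{p-1} \neq 0$. By \cref{lem:nondgenTsel} applied with $k = p-1$ there is an $x$ with $\bp{U^{p-1}x,x} \neq 0$; since $U^{p-1}x \neq 0$ this $x$ generates a $p$-cycle, and setting $v_i = U^{p-i}x$ gives $\bp{v_1,v_p} = \bp{U^{p-1}x,x} \neq 0$. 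Then \cref{prop:adaptCyc} lets me replace $x$ by a generator whose cycle is skew-normal (of sign $\pm 1$ when $\lambda \in \R$, of sign $1$ when $\lambda \in \C\setminus\R$). Let $H$ be the real subspace spanned by this adapted cycle: for real $\lambda$ I would take the generator in the real generalized eigenspace, using that $\bp{U^{p-1}x,x}\neq 0$ forces the same for the real or imaginary part of $x$; for $\lambda \in \C \setminus \R$, $H$ is the real span $K_{\lambda \oplus \conj{\lambda}}$ of the cycle together with its conjugate, on which $T$ acts through the real basis $\{u_i,v_i\}$ built in \cref{eq:realSeqI,eq:realSeqII}. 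By \cref{prop:indxCyc} the subspace $H$ is non-degenerate, and by construction $T|_H$ is exactly one of the two displayed block forms. Finally, $H$ is $T$-invariant and non-degenerate, so by property~\ref{thm:saInvSub} the orthogonal complement $H^{\perp}$ is also $T$-invariant with $V = H \obot H^{\perp}$, and $T|_{H^{\perp}}$ is again self-adjoint; applying the induction hypothesis to $H^{\perp}$ completes the decomposition. The index and dimension bookkeeping in the final sentence is then immediate from \cref{prop:indxCyc} and the orthogonality of the summands.

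The step I expect to be the main obstacle is the clean handling of the real-versus-complex dichotomy in the forward direction: ensuring that for a real eigenvalue the adapted cycle can be taken to consist of real vectors, and that for a non-real eigenvalue the passage to $K_{\lambda \oplus \conj{\lambda}}$ genuinely yields a $T$-invariant, non-degenerate \emph{real} subspace carrying the stated $2\times 2$-block matrix. Both points are largely settled by the computations preceding the theorem together with \cref{prop:indxCyc}, so the remaining difficulty is organizational --- making the induction peel off exactly one block at a time and confirming that $T|_H$ matches the displayed matrices on the nose.
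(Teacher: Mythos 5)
Your proposal is correct and takes essentially the same approach as the paper: the same induction on $\dim V$, with \cref{lem:nondgenTsel} applied at the nilpotency index $p$ of $U|_{K_\lambda}$, \cref{prop:adaptCyc} to skew-normalize the cycle, \cref{prop:indxCyc} for non-degeneracy of $H$, and property~\ref{thm:saInvSub} to pass to $H^{\perp}$ (your blockwise verification of the converse is the computation the paper leaves as ``easily checked''). The one inaccuracy is your parenthetical claim that $\bp{U^{p-1}x,x} \neq 0$ forces the same for the real or imaginary part of $x$ --- writing $x = a + ib$, both $\bp{U^{p-1}a,a}$ and $\bp{U^{p-1}b,b}$ can vanish while the cross term $\bp{U^{p-1}a,b}$ survives --- but this is harmless, since for $\lambda \in \R$ one can apply \cref{lem:nondgenTsel} directly in the real space (or take $a+b$ as the generator when only the cross term is non-zero).
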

\begin{proof}
	We proceed by induction. If $n=1$ then this result trivially holds. So suppose $n \geq 2$ and this result is true for all self-adjoint operators on scalar product spaces of dimension strictly less than n. Now we show that this holds when $\dim V = n$.

	Fix an eigenvalue $\lambda$ for T (which exists after complexification of $V$ if necessary). Let $U = (T-\lambda I)$. Let $p$ be the smallest integer such that $\dim N(U^{p}) = \dim N(U^{p+1})$, thus $K_{\lambda} = N(U^{p})$. Then $\dim N(U^{p-1}) < \dim N(U^{p})$, hence $U^{p-1}|_{K_{\lambda}} \neq 0$, thus by \cref{lem:nondgenTsel} there exists an $x \in K_{\lambda}$ such that $\bp{U^{p-1}x, x} \neq 0$. Note that by construction $p$ is the smallest integer such that $U^{p}x = 0$, hence x generates a $p$-cycle of generalized eigenvectors with eigenvalue $\lambda$.
	

	Hence by \cref{prop:adaptCyc}, the $p$-cycle of generalized eigenvectors generated by $x$ can be modified into another such $p$-cycle spanning the same subspace as the original and adapted to the scalar product. Thus we now assume that the $p$-cycle of generalized eigenvectors generated by $x$ is adapted to the scalar product. Note that it follows by \cref{lem:nullBasSig} that the set of $p$ vectors in this cycle are linearly independent. Let $H$ be the real subspace spanned by the $p$-cycle(s) generated by $x$ if $\lambda \in \R$ or by $x$ and its conjugate if $\lambda \in \C \setminus \R$. By \cref{prop:indxCyc}, $H$ is non-degenerate and by construction $H$ is $T$-invariant. If $H = V$ then we are done, so assume $H \subsetneq V$. Then by property~\ref{thm:saInvSub}, $H^{\perp}$ is an invariant subspace of $T$, and is complementary to $H$ by non-degeneracy of $H$. Let $T' = T|_{H^{\perp}}$, then $H^{\perp}$ is a scalar product space with $0 < \dim H^{\perp} < n$ and $T'$ is a self-adjoint operator on $H^{\perp}$. Hence the induction hypothesis applies to $T'$, in which case we conclude that the result holds for $T$.
	
	The converse is also easily checked.
\end{proof}

\section{Uniqueness of the metric-Jordan canonical form}

In this section $T$ is self adjoint operator on a scalar product space $V$. We will show in what sense each self-adjoint operator $T$ admits a ``unique'' metric-Jordan canonical form. We will do this by showing that the parameters appearing in any two canonical forms derived by \cref{thm:clasSelfadj} must be the same.  Then we will show how this result can be used to determine if two self-adjoint operators are isometrically equivalent.

\begin{lemma} \label{lem:cycScalP}
	Let $U = (T - \lambda I)$ for some eigenvalue $\lambda$, suppose $x$ generates an adapted cycle of length $p$ and sign $\varepsilon$ and denote by $v_i = U^{p-i} x$. Also let $H$ be the subspace spanned by this cycle.
	
	For any $0 \leq k \leq p -1$ define a symmetric bilinear form $[\cdot,\cdot]_k$ on $H$ by
	
	\begin{equation}
		[x,y]_k = \bp{U^k x, y}
	\end{equation}
	
	for $x,y \in H$. Then the number of zeros in any diagonal representation for $[\cdot,\cdot]_k$ is $k$. If the $\lambda \in \R$ then the number of negative entries in any diagonal representation for $[\cdot,\cdot]_k$ is
	
	\begin{equation}
		\begin{cases}
			\lfloor \frac{(p-k)+1}{2} \rfloor & \text{if } \epsilon = -1 \\
			(p-k)- \lfloor \frac{(p-k)+1}{2} \rfloor & \text{if } \epsilon = 1
		\end{cases}
	\end{equation}
	
	In conclusion, we see that the of invariants of $[\cdot,\cdot]_k$ depends only on $p,k,\varepsilon$.
\end{lemma}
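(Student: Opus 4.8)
The plan is to compute the Gram matrix of $[\cdot,\cdot]_k$ in the cycle basis $\{v_1,\dotsc,v_p\}$ and read off its invariants directly, invoking Sylvester's Law of Inertia to guarantee that what I compute is basis-independent. The key computational fact is the index shift $U^{k} v_i = U^{k+p-i} x = v_{i-k}$ whenever $i > k$, while $U^{k} v_i = 0$ whenever $i \le k$ (since then the exponent on $x$ is at least $p$). Combined with the adapted, i.e. skew-normal, property that $\bp{v_a, v_b} = \varepsilon$ exactly when $a+b = p+1$ and $\bp{v_a,v_b} = 0$ otherwise, this lets me evaluate every entry $[v_i,v_j]_k = \bp{U^{k} v_i, v_j}$ in closed form.

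Carrying this out, I expect to find $[v_i,v_j]_k = \bp{v_{i-k}, v_j} = \varepsilon$ precisely when $i > k$ and $(i-k)+j = p+1$, that is when $i+j = (p+k)+1$, and $[v_i,v_j]_k = 0$ otherwise. For valid indices $1 \le i,j \le p$ the anti-diagonal condition $i+j = (p+k)+1$ already forces $i,j \in \{k+1,\dotsc,p\}$, so the rows and columns indexed $1,\dotsc,k$ vanish identically while the remaining entries lie on a single shifted anti-diagonal carrying $\varepsilon$'s. Thus the matrix of $[\cdot,\cdot]_k$ splits as a zero block of size $k$ together with a nonsingular $(p-k)\times(p-k)$ anti-diagonal block; in particular its radical is exactly $\spa{v_1,\dotsc,v_k}$, of dimension $k$.

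Two conclusions then follow. Since the radical has dimension $k$, the rank is $p-k$, so by Sylvester's Law of Inertia the number of zeros in any diagonal representation is $k$, valid in both the real and complex cases. For the count of negative entries when $\lambda \in \R$, the crucial observation is that the nonsingular block is itself the Gram matrix of a skew-normal sequence: relabelling $w_m = v_{k+m}$ for $1 \le m \le p-k$, one checks $[w_a,w_b]_k = \bp{v_a, v_{k+b}} = \varepsilon$ exactly when $a+b = (p-k)+1$, so $w_1,\dotsc,w_{p-k}$ is a skew-normal sequence of length $p-k$ and sign $\varepsilon$ with respect to $[\cdot,\cdot]_k$. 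Applying \cref{lem:nullBasSig} to this sequence on the non-degenerate space $\spa{v_{k+1},\dotsc,v_p}$ yields exactly the index in the displayed case expression, hence the asserted number of negative diagonal entries; the invariants therefore depend only on $p,k,\varepsilon$.

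I expect the only genuine friction to be index bookkeeping, namely keeping the shift by $k$, the anti-diagonal condition $i+j = p+k+1$, and the boundary cases $i \le k$ straight, rather than any conceptual obstacle; the whole statement reduces cleanly to \cref{lem:nullBasSig} applied to a shortened cycle. The one point deserving a line of justification is that \cref{lem:nullBasSig} applies verbatim to the restricted form: its proof uses only the pairwise products of the skew-normal sequence, so taking $\spa{v_{k+1},\dotsc,v_p}$ equipped with $[\cdot,\cdot]_k$ as the ambient non-degenerate scalar product space is legitimate.
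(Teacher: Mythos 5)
Your proof is correct and takes essentially the same route as the paper's: both compute the Gram matrix $[v_i,v_j]_k$, observe that it is supported on the shifted anti-diagonal $i+j = p+k+1$ (so that $v_1,\dotsc,v_k$ span the radical, giving the $k$ zeros), then reindex $v_{k+1},\dotsc,v_p$ as a skew-normal sequence of length $p-k$ and sign $\varepsilon$ and conclude via \cref{lem:nullBasSig} together with Sylvester's law of inertia. Your closing remark justifying why \cref{lem:nullBasSig} applies to the (degenerate on $H$) form $[\cdot,\cdot]_k$ is a point the paper leaves implicit, and is a welcome addition.
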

\begin{proof}
	We prove this by exhibiting a diagonal representation for $[\cdot,\cdot]$ restricted to $H$.

	First observe that for $i,j \in \{1,\dotsc,p\}$

	\begin{equation}
		[v_i , v_j]_k = \bp{U^k U^{p-i} x, U^{p-j} x} = \bp{U^{2p+k-i-j} x,x }
	\end{equation}
	
	The above equation is non-zero iff
	
	\begin{align}
		 2p+k-i-j & = p-1 \\
		\Leftrightarrow   p+k-i-j & = -1 \\
		\Leftrightarrow i + j & = p+k+1
	\end{align}
	
	It follows that if $i < k+1$, then $[v_i , v_j] = 0$ for any $j$. Now define vectors $v'_i = v_{i+k}$ for $i \in \{1,\dotsc, p-k \}$. Then $\bp{v'_i, v'_j} \neq 0$ iff 
	
	\begin{align}
	 i+k + j+k & = p+k+1 \\
		\Leftrightarrow   i+j & = p-k+1 
	\end{align}
	
	Hence $v'_1,\dotsc,v'_{p-k}$ (or equivalently $v_{k+1},\dotsc,v_{p}$) form a pseudo-orthonormal set of vectors with sign $\varepsilon$. Thus the formula for the number of negative entries when $\lambda \in \R$ follows from \cref{lem:nullBasSig}. Also observe that the number of zeros is $k$. Then by Sylvester's law of inertia it follows that the invariants of $[\cdot,\cdot]_k$ are given as above and hence depend only on $p,k,\varepsilon$.
\end{proof}

For a real eigenvalue $\lambda$, an adapted cycle $x,Ux,\dotsc,U^{p-1}x$ is called \emph{positive} if $\bp{U^{p-1}x,x} = 1$ or negative if $\bp{U^{p-1}x,x} = -1$. By a metric-Jordan canonical basis, we mean one that is obtained from \cref{thm:clasSelfadj}.

\begin{theorem}[Uniqueness of the metric-Jordan canonical form]
	Suppose $\lambda$ is an eigenvalue of $T$. If $\lambda \in \R$,  then the number of positive (negative) cycles in $K_\lambda$ of a given length is independent of any metric-Jordan canonical basis. If $\lambda \in \C \setminus \R$,  then the number of cycles in $K_\lambda$ of a given length is independent of any metric-Jordan canonical basis.
\end{theorem}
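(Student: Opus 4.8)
The plan is to attach to the generalized eigenspace $K_\lambda$ the family of intrinsic symmetric bilinear forms $[\cdot,\cdot]_k$, $[x,y]_k = \bp{U^k x, y}$ with $U = T-\lambda I$, for $k = 0,1,2,\dotsc$, and to recover the cycle data from their Sylvester invariants. The crucial point is that each $[\cdot,\cdot]_k$ is defined purely from $T$, $\lambda$ and the scalar product, so by Sylvester's law of inertia its numbers of zero, negative and positive diagonal entries are invariants of $T$, independent of any choice of basis. Writing $n_p^{+}$ (resp. $n_p^{-}$) for the number of positive (resp. negative) cycles of length $p$ appearing in a fixed metric-Jordan canonical basis, the whole theorem reduces to showing that the $n_p^{\pm}$ can be reconstructed from these invariants.

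First I would fix a metric-Jordan canonical basis and use \cref{thm:clasSelfadj} to decompose $K_\lambda = \bigobot_i H_i$ into cycle subspaces. Since each $H_i$ is $U$-invariant and the $H_i$ are mutually $\bp{\cdot,\cdot}$-orthogonal, they are also $[\cdot,\cdot]_k$-orthogonal: for $x \in H_i$ we have $U^k x \in H_i$, which pairs to zero against every $H_j$ with $j \neq i$. Hence each $[\cdot,\cdot]_k$ is block diagonal along the cycles, and its invariants on $K_\lambda$ are obtained by summing the per-cycle contributions computed in \cref{lem:cycScalP}: a cycle of length $p$ contributes $\min(k,p)$ zeros, and when $\lambda \in \R$ and $p > k$ it contributes $\lfloor \frac{p-k}{2} \rfloor$ negative entries if it is positive and $\lfloor \frac{(p-k)+1}{2} \rfloor$ if it is negative (cycles with $p \le k$ contribute only zeros).

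The next step is to invert the resulting linear systems. Writing $z_k$ for the intrinsic number of zeros of $[\cdot,\cdot]_k$, the zero-count gives $z_k = \sum_p (n_p^{+} + n_p^{-}) \min(k,p)$, whose second difference yields the total cycle count $n_p^{+}+n_p^{-} = 2 z_p - z_{p-1} - z_{p+1}$; this recovers the basis-independence of the unsigned Jordan data. To separate the signs I would use the signature $\sigma_k$ (positives minus negatives) of $[\cdot,\cdot]_k$, which is also a Sylvester invariant when $\lambda \in \R$. A short computation from the per-cycle counts shows that a cycle of length $p$ contributes to $\sigma_k$ exactly $\varepsilon$ when $p - k$ is odd and $p > k$, and $0$ otherwise; hence $\sigma_k = \sum_{p > k,\; p-k \text{ odd}} (n_p^{+} - n_p^{-})$. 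Telescoping in steps of two gives $n_{k+1}^{+} - n_{k+1}^{-} = \sigma_k - \sigma_{k+2}$, so the signed difference is recovered, and together with the total count this pins down each $n_p^{+}$ and $n_p^{-}$ individually.

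I expect the separation of signs to be the only genuinely delicate step: the zero-count alone reproduces the familiar unsigned Jordan invariants, but distinguishing positive from negative cycles hinges on the observation that a cycle influences the signature of $[\cdot,\cdot]_k$ solely through the parity of $p-k$, and then with sign precisely $\varepsilon$. Finally, for $\lambda \in \C \setminus \R$ there is no sign to track, since by \cref{prop:indxCyc} the real subspace of each such cycle is balanced (dimension $2p$, index $p$); here I would run the identical argument with the complex form $[\cdot,\cdot]_k$ on $K_\lambda \subseteq V^{\C}$, where the complex Sylvester law supplies the zero-count $z_k$ as its only invariant, and the same second-difference inversion determines the number of cycles of each length.
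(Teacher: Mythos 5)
Your proposal is correct, and its engine is the same as the paper's: the intrinsic forms $[x,y]_k = \bp{U^k x, y}$ on $K_\lambda$, whose Sylvester invariants are basis-free, combined with the per-cycle counts of \cref{lem:cycScalP} and the orthogonal cycle decomposition coming from \cref{thm:clasSelfadj}. The difference lies in how the cycle data are reconstructed from those invariants. The paper argues by downward induction on the length $p$: it isolates the span $H$ of all cycles of length exceeding $p$, splits into the cases of no length-$p$ cycles versus some, and at each stage compares only the single form $[\cdot,\cdot]_{p-1}$ against Sylvester's law. You instead sum the per-cycle contributions globally and invert the resulting linear relations in closed form: the zero counts give $n_p^{+} + n_p^{-} = 2z_p - z_{p-1} - z_{p+1}$, and the signatures give $n_{k+1}^{+} - n_{k+1}^{-} = \sigma_k - \sigma_{k+2}$. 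Your key computation checks out: by \cref{lem:cycScalP} a length-$p$ cycle of sign $\varepsilon$ has signature contribution $\varepsilon\left(2\lfloor\tfrac{(p-k)+1}{2}\rfloor - (p-k)\right)$ in $[\cdot,\cdot]_k$, which is $\varepsilon$ when $p-k$ is odd and $p>k$, and $0$ otherwise, so the telescoping is valid. What your route buys is explicit formulas for the multiplicities $n_p^{\pm}$ in terms of basis-independent quantities (recovering the unsigned Jordan structure as a by-product); what the paper's induction buys is that it never needs the signature bookkeeping as such, only a step-by-step comparison of invariants. Your treatment of the complex case, dropping signs and using only the zero counts justified by the complex form of Sylvester's law, matches the paper's convention that complex diagonal entries are normalized to be positive or zero.
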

\begin{proof}

	Fix an eigenvalue $\lambda$ of $T$ and let $U = (T - \lambda I)$. Restrict the argument to the vector space $K_\lambda$, i.e set $V = K_\lambda$. Denote by $[\cdot, \cdot]_i$ the symmetric bilinear form given by:
	
	\begin{equation}
		[x,y]_i = \bp{U^i x, y}
	\end{equation}
	
	\noindent for $x,y \in K_\lambda$. We will prove that the number of positive (negative) cycles of a given length depend only on the number of positive (negative) entries in a diagonal representation for $[\cdot, \cdot]_0,[\cdot, \cdot]_1,\dotsc,[\cdot, \cdot]_n$. It is understood that the complex representations are chosen so that there are only positive or zero entries in it. It will follow by Sylvester's law of inertia that these signs are independent of any basis.
	
	Fix a metric-Jordan canonical basis for $T|_{K_\lambda}$. It's known that $U^l = 0$ for any $l > n$, hence it follows that the number of cycles of length larger than $n$ are determined by invariants of $[\cdot, \cdot]_l$ for $l > n$. Suppose inductively that the statement holds for all cycles of length strictly larger than $p$. We will now prove the statement for cycles of length $p$.
	
	Denote by $H$ the $T$-invariant non-degenerate (possibly zero) subspace spanned by all cycles of length strictly larger than $p$ in this canonical basis. Observe that since $H$ is $T$-invariant, it follows for any $l \geq 0$ that $[x,y]_l = 0$ for $x \in H$ and $y \in H^\perp$.
	
	\begin{parts}
		\item There are no cycles of length $p$ in this canonical basis. \\
		Then note that $[x,y]_{p-1} \equiv 0$ for any $x,y \in H^\perp$ and if $H \neq 0$ the invariants of $[\cdot,\cdot]_{p-1}$ on $H$ are uniquely determined by invariants of $[\cdot,\cdot]_l$ for $l \geq p$ by \cref{lem:cycScalP}. Also the invariants of $[\cdot,\cdot]_{p-1}$ over $K_\lambda$ are determined by Sylvester's law of inertia, hence it follows that the number of cycles of length $p$ are uniquely determined.
		\item Let $x_1,\dotsc,x_m$ be generators for cycles of length $p$ in this canonical basis. \\
		For vectors from $H^\perp$ in this canonical basis the only non-zero diagonal entries of $[\cdot,\cdot]_{p-1}$ are
		
		\begin{equation}
			[x_i,x_i]_{p-1} = \bp{U^{p-1}x_i ,x_i} = \pm 1 \quad i = 1, \dotsc, m
		\end{equation}
		
		Again, if $H \neq 0$ the invariants of $[\cdot,\cdot]_{p-1}$ on $H$ are uniquely determined by invariants of $[\cdot,\cdot]_l$ for $l \geq p$ by \cref{lem:cycScalP}. The invariants of $[\cdot,\cdot]_{p-1}$ over $K_\lambda$ are determined by Sylvester's law of inertia, hence it follows that the number of positive (and negative) cycles of length $p$ are uniquely determined.
	\end{parts}
	
	Thus the result follows by induction on $p$.
\end{proof}

We can now state what we mean by ``the'' metric-Jordan canonical form:

\begin{definition}
	Let $T$ be a self-adjoint operator on a scalar product space $V$. To each adapted $p$-cycle of sign $\varepsilon$ with eigenvalue $\lambda \in \mathbb{C}$ we associate a $3$-tuple $(\lambda,p,\varepsilon)$. A canonical form given by  \cref{thm:clasSelfadj} gives an un-ordered list of such $3$-tuples counting multiplicities. We call this list \emph{the metric-Jordan canonical form}.
\end{definition}

By the above theorem, it follows that the above definition is well defined, i.e. each self-adjoint operator $T$ admits precisely one metric-Jordan canonical form. The following example shows that the signs appearing in these canonical forms add some subtleties:

\begin{example}
	Suppose $V$ is Minkowski space equipped with the standard metric 
	\begin{equation}
		g = \diag(-1,1,\dotsc,1)
	\end{equation}
	For $\lambda_1 < \dotsc < \lambda_n \in \mathbb{R}$ define two self-adjoint operators $T_1$ and $T_2$ as follows:
	
	\begin{align}
		T_1 & = \diag(\lambda_1 , \lambda_2, \lambda_3, \dotsc , \lambda_n) \\
		T_2 & = \diag(\lambda_2 , \lambda_1, \lambda_3, \dotsc , \lambda_n)
	\end{align}
	
	Now observe that even though $T_1$ and $T_2$ have the same eigenvalues, they have different metric-Jordan canonical forms. We will show shortly that $T_1$ and $T_2$ are isometrically inequivalent, in the sense that there is no $R \in O(V)$ which relates $T_1$ and $T_2$ by a similarity transformation.
\end{example}

Note that the above example is in sharp contrast with the Euclidean case where $T_1$ and $T_2$ as defined above would be isometrically equivalent.

\begin{theorem}[Isometric Equivalence of self-adjoint operators]
	Suppose $S$ and $T$ are self-adjoint operators on a scalar product space $V$. Then $S$ and $T$ differ by an isometry $R \in O(V)$ iff they have the same metric-Jordan canonical form.
\end{theorem}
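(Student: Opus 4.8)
The plan is to prove both implications by exploiting the fact, established in \cref{thm:clasSelfadj} together with \cref{prop:indxCyc,lem:nullBasSig}, that both the action of a self-adjoint operator on an adapted cycle \emph{and} the scalar product restricted to the (real) subspace spanned by that cycle are completely determined by the associated $3$-tuple $(\lambda,p,\varepsilon)$. Throughout, recall that $R \in O(V)$ means $\bp{Rx, Ry} = \bp{x,y}$ for all $x,y \in V$, and that ``$S$ and $T$ differ by an isometry'' means $S = R T R^{-1}$ for some such $R$.

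For the forward implication, suppose $S = R T R^{-1}$ with $R \in O(V)$. First I would note that $SR = RT$ gives $(S-\lambda I)R = R(T-\lambda I)$ for every scalar $\lambda$, so $R$ carries generalized eigenvectors of $T$ to generalized eigenvectors of $S$ with the same eigenvalue, and sends a $p$-cycle $\{v_1,\dotsc,v_p\}$ of $T$, with $v_i = (T-\lambda I)^{p-i}x$, to the $p$-cycle $\{R v_1,\dotsc,R v_p\}$ of $S$ generated by $Rx$, since $R v_i = (S-\lambda I)^{p-i}(Rx)$. Because $R$ is an isometry, $\bp{R v_i, R v_j} = \bp{v_i, v_j}$, so the image is again adapted to the scalar product and has exactly the same length $p$ and sign $\varepsilon$. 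Applying $R$ (or its complexification $R^{\C}$ when $\lambda \in \C \setminus \R$) to a full metric-Jordan canonical basis of $T$ thus produces one for $S$ yielding the identical unordered list of $3$-tuples; by the uniqueness theorem this list is basis-independent, so $S$ and $T$ have the same metric-Jordan canonical form.

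For the converse, suppose $S$ and $T$ have the same metric-Jordan canonical form. I would fix metric-Jordan canonical bases for $T$ and for $S$ furnished by \cref{thm:clasSelfadj}, and use the equality of the two $3$-tuple lists to pair off the cycles: to each cycle of $T$ with data $(\lambda,p,\varepsilon)$ I match one of $S$ with the same data. Define $R$ to be the linear map sending each canonical basis vector of $T$ to the correspondingly-placed vector of the matched cycle of $S$ (working with the real bases $\{u_i,v_i\}$ of \cref{eq:realSeqI,eq:realSeqII} when $\lambda \in \C \setminus \R$). Two points then need checking. First, $R \in O(V)$: distinct cycles are mutually orthogonal on both sides (both decompositions are $\bigobot$-orthogonal), while within a matched pair the scalar products of corresponding vectors coincide, being prescribed by the common data $(\lambda,p,\varepsilon)$ via \cref{lem:nullBasSig}; thus $R$ preserves the scalar product of every pair of basis vectors and hence is an isometry. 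Second, $RTR^{-1}=S$: on each matched cycle both $T$ and $S$ act by the \emph{same} standard block determined by $(\lambda,p)$, so $RT = SR$ holds on the basis and therefore on all of $V$.

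The main obstacle is the bookkeeping in the converse rather than any deep difficulty: one must be sure that the scalar product \emph{and} the operator action are jointly fixed by the tuple $(\lambda,p,\varepsilon)$, which is exactly the content of \cref{thm:clasSelfadj} in combination with \cref{prop:indxCyc,lem:nullBasSig}. In particular the signs $\varepsilon$ must agree cycle-by-cycle and not merely in aggregate, which is precisely why the uniqueness theorem, guaranteeing that the \emph{signed} list of tuples is a genuine isometric invariant, is what makes the matching in the converse well-posed and the forward direction conclusive.
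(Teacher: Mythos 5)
Your proposal is correct and follows essentially the same route as the paper: the forward direction transports a metric-Jordan canonical basis by the isometry $R$ (the paper phrases this as $g|_{\tilde{\beta}} = g|_{\beta}$ and $T|_{\tilde{\beta}} = S|_{\beta}$, then invokes uniqueness), and the converse constructs $R$ as the map matching corresponding canonical basis vectors, which the paper states as ``clear'' and you simply spell out in more detail. Your added bookkeeping --- the intertwining relation $(S-\lambda I)R = R(T-\lambda I)$, cycle-by-cycle sign matching, and the explicit check that $R$ preserves the Gram matrix --- fills in exactly the steps the paper leaves implicit, with no substantive difference in method.
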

\begin{proof}
	It's clear that if $S$ and $T$ have the same metric-Jordan canonical form then there is an isometry $R \in O(V)$ which relates the two operators, namely the transformation that relates a metric-Jordan canonical basis of $S$ to a metric-Jordan canonical basis of $T$.

%
	
	Suppose $T$ is given as follows relative to $S$:
	
	\begin{equation}
		T = R S R^{-1}
	\end{equation}
	
	Let $\beta = \{v_1,\dotsc,v_n\}$ be a canonical basis for $S$. Then consider the basis $\tilde{\beta} = \{R v_1,\dotsc, Rv_n \}$ for $T$. Since $R$ is an isometry, we have
	
	\begin{equation}
		g|_{\tilde{\beta}} = g|_{\beta}
	\end{equation}
	
	The equation relating $T$ to $S$ implies that
	
	\begin{equation}
		T|_{\tilde{\beta}} = S|_{\beta}
	\end{equation}
	
	Hence $S$ and $T$ have the same metric-Jordan canonical form.
\end{proof}

\section{Minkowski Space}

Fix a self-adjoint operator $T$ in Minkowski space. We will use \cref{thm:clasSelfadj} to enumerate the possible Jordan canonical forms of $T$ together with the metric in an adapted basis. As a consequence of \cref{thm:clasSelfadj}, we simply have to determine which combination of Jordan blocks are possible in Minkowski space by imposing the dimension and signature restrictions. This can be done with the help of \cref{prop:indxCyc}, since it gives us the index of a given subspace associated with a Jordan block. We denote by $D_{k}$ a diagonal $k \times k$ matrix and $I_{k}$ the identity $k \times k$  matrix. We have the following cases.

\begin{parts}
	\item $T$ is diagonalizable with real spectrum
	
	In this case $T$ must have a time-like eigenvector. Indeed, since each eigenspace $E_{\lambda}$ is non-degenerate,  one eigenspace, say $H$, must have index $1$. Then by obtaining an orthonormal basis for $H$, we can obtain a time-like eigenvector. Thus $T$ has the following form:
	
	\begin{align}
		T & = D_{n} & 
		g = \diag(-1,1,...,1)
	\end{align}

	\item $T$ has a complex eigenvalue $\lambda = a + i b$ with $b \neq 0$
	
	By \cref{prop:indxCyc} the real subspace $H$ spanned by a complex eigenvector with eigenvalue $\lambda$ and its complex conjugate must have index $1$. Since this subspace is $T$-invariant, by \cref{thm:saInvSub} $H^{\perp}$ is a complementary invariant subspace, which must be Euclidean. Hence $T$ must have the following form:
	
	\begin{align}
		T & =
		\begin{pmatrix}
		a & b & 0 \\ 
		-b & a &  \\ 
		0 &  & D_{n-2}
		\end{pmatrix}
		& 
		g =
		\begin{pmatrix}
		1 & 0 & 0 \\ 
		0 & -1 &  \\ 
		0 &  & I_{n-2}
		\end{pmatrix} 
	\end{align}
	
	\item $T$ has real eigenvalues but is not diagonalizable
	
	In this case we go through the possible multidimensional Jordan blocks associated to a real irreducible subspace, say $H$, of $T$. By \cref{thm:clasSelfadj}, each basis for this subspace can be adapted to the scalar product, hence is non-degenerate. By \cref{prop:indxCyc} there are three types of Jordan blocks which have an associated subspace, $H$, with index one. For each of these subspaces, $H^{\perp}$ is a complementary $T$-invariant Euclidean subspace. The first two cases occur when $\dim H = 2$, and are given as follows:
	
	\begin{align}
		T & =
		\begin{pmatrix}
		\lambda & 1 & 0 \\ 
		0 & \lambda &  \\ 
		0 &  & D_{n-2}
		\end{pmatrix} 
		& g = 
		\begin{pmatrix}
		0 & \epsilon & 0 \\ 
		\epsilon & 0 &  \\ 
		0 &  & I_{n-2}
		\end{pmatrix} \quad \epsilon = \pm 1
	\end{align}
	
	Note that the above form contains two metric-Jordan canonical forms depending on the sign of $\epsilon$. The third occurs when $\dim H = 3$:
	
	\begin{align}
		T & =
		\begin{pmatrix}
		\lambda & 1 & 0 &  \\ 
		0 & \lambda & 1 & 0 \\ 
		0 & 0 & \lambda &  \\ 
		 & 0 &  & D_{n-3}
		\end{pmatrix}
		& g = 
		\begin{pmatrix}
		0 & 0 & 1 &  \\
		0 & 1 & 0 & 0 \\ 
		1 & 0 & 0 &  \\ 
		 & 0 &  & I_{n-3}
		\end{pmatrix} 
	\end{align}
	
	We also note that this case ($T$ has real eigenvalues but is not diagonalizable) holds iff $T$ has a unique lightlike eigenvector. This fact can be deduced by inspection of the above canonical forms.
\end{parts}

Now, we collect some necessary and sufficient conditions concerning the diagonalizability of $T$ in the following theorem. The second and third facts are from Theorem~4.1 in \cite{Hall1996} while the last fact is from Section~9.5 in \cite{greub1975linear}. All these facts can be readily deduced from the canonical forms listed above.
\begin{theorem}[Properties of self-adjoint operators in Minkowski space]
	Let V be a Minkowski space and T a self-adjoint operator on V. Then the following statements are true:
	\begin{enumerate}
		\item T is diagonalizable with a real spectrum iff T has 1 timelike eigenvector or equivalently T has $n-1$ linearly independent spacelike eigenvectors.
		\item If T has two linearly independent null eigenvectors then T is diagonalizable with a real spectrum and T has a time-like eigenspace of dimension at least 2 containing these eigenvectors.
		\item If T has a real spectrum, then T is diagonalizable iff it has no null eigenvectors or at least two linearly independent null eigenvectors. In other words, T is not diagonalizable iff it has a unique null eigendirection.
		\item If $n \geq 3$ and $\bp{Tx, x} \neq 0$ for all null vectors x then T is diagonalizable with a real spectrum.
	\end{enumerate}
\end{theorem}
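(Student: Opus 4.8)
The plan is to leverage the complete trichotomy established just above: by \cref{thm:clasSelfadj} together with the constraint $\ind V = 1$ and \cref{prop:indxCyc}, every self-adjoint $T$ on Minkowski space falls into exactly one of Case~1 (diagonalizable, real spectrum), Case~2 (a complex eigenvalue), or Case~3 (real spectrum, non-diagonalizable, with the three listed Jordan-block possibilities). All four assertions will be read off from this list once I record three structural facts, each obtained by inspecting the displayed canonical forms. First, a timelike eigenvector occurs only in Case~1: in Case~2 every real eigenvector lies in the Euclidean summand $D_{n-2}$ and is spacelike, while in Case~3 every eigenvector lies in the Euclidean summand or is a multiple of the Jordan initial vector $v_1$, which the metrics show to be lightlike. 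Second, the number of linearly independent null eigendirections is $0$ in Case~2, exactly $1$ (namely $\spa{v_1}$) in Case~3 — even when $D_{n-2}$ shares the Jordan eigenvalue, since $av_1 + w$ with $w$ in the Euclidean block is null only if $w=0$ — and in Case~1 it is $0$ or $\geq 2$ according to whether the unique index-$1$ eigenspace is one- or higher-dimensional (an index-$1$ space of dimension $\geq 2$ carries at least two independent null directions). Third, in Case~3 the null eigenvector satisfies $\bp{T v_1, v_1} = \lambda \bp{v_1, v_1} = 0$.

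For assertion~1 I would argue the timelike characterization directly. Case~1 supplies a timelike eigenvector by the first structural fact. Conversely, if $Tv = \lambda v$ with $v$ timelike, then $\lambda \in \R$ (a real eigenvector of a real operator has a real eigenvalue) and $\spa{v}$ is a non-degenerate index-$1$ line, so $\spa{v}^\perp$ is Euclidean, $(n-1)$-dimensional and $T$-invariant by property~\ref{thm:saInvSub}; orthogonally diagonalizing the self-adjoint operator $T|_{\spa{v}^\perp}$ on this Euclidean space puts $T$ in Case~1 and simultaneously produces $n-1$ mutually orthogonal spacelike eigenvectors. For assertion~2, two independent null eigenvectors exclude Cases~2 and 3 by the second structural fact, forcing Case~1; since a null eigenvector cannot lie in a Euclidean eigenspace, both must lie in the single index-$1$ eigenspace, which is therefore timelike and of dimension $\geq 2$.

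Assertions~3 and 4 then fall out of the same bookkeeping. Under a real spectrum only Cases~1 and 3 survive, and the second structural fact says Case~3 has a unique null eigendirection while Case~1 has either none or at least two; this is exactly the stated equivalence that $T$ is non-diagonalizable iff $T$ has a unique null eigendirection. For assertion~4, assume $n \geq 3$ and $\bp{Tx,x} \neq 0$ for every null $x$. The third structural fact excludes Case~3 immediately, since its null eigenvector gives $\bp{T v_1, v_1} = 0$. To exclude Case~2 I would use the extra Euclidean dimension guaranteed by $n \geq 3$: writing the complex block on $\spa{u_1, v_1}$ with metric $\diag(1,-1)$ and picking a spacelike eigenvector $e \in D_{n-2}$ with $Te = \mu e$, I look for a null vector $x = \alpha u_1 + \beta v_1 + e$. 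The null condition reads $\beta^2 - \alpha^2 = 1$, and a short computation gives $\bp{Tx,x} = (\mu - a) + 2\alpha\beta b$; since $b \neq 0$ the two conditions can be solved simultaneously for real $\alpha,\beta$ (substituting $\alpha\beta = (a-\mu)/2b$ reduces everything to a quadratic in $\beta^2$ with a positive root), producing a null $x$ with $\bp{Tx,x} = 0$ and contradicting the hypothesis. Hence only Case~1 remains.

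The step needing the most care — and the main obstacle — is the spacelike half of assertion~1. The construction above yields $n-1$ \emph{mutually orthogonal} spacelike eigenvectors, and that is what makes the converse work: an orthogonal family of $n-1$ spacelike eigenvectors spans a Euclidean hyperplane $W$, whose $T$-invariant orthogonal complement $W^\perp$ is a timelike line containing a timelike eigenvector, returning us to the already-proved timelike case. One must resist weakening ``mutually orthogonal'' to merely ``linearly independent'': in Lorentzian signature $n-1$ linearly independent spacelike eigenvectors may span a \emph{degenerate} hyperplane. For instance, in a Case~3 operator whose Euclidean block $D_{n-2}$ has an eigenvalue equal to the Jordan eigenvalue $\lambda$, the $\lambda$-eigenspace $\spa{v_1, e}$ (with $v_1$ lightlike and $e$ spacelike) contains two independent spacelike eigenvectors $e$ and $v_1+e$, yet $T$ is not diagonalizable. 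So the orthogonality — equivalently, that the spacelike eigenvectors span a non-degenerate subspace — is essential to the argument.
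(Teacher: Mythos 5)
Your proposal is correct, and it takes the same route the paper intends: the paper's entire proof is the remark that the facts ``can be readily deduced from the canonical forms listed above'' (plus citations to Hall and Greub), so your case trichotomy from \cref{thm:clasSelfadj} and \cref{prop:indxCyc}, the three structural facts about where timelike and null eigenvectors can live, and the explicit computation ruling out a complex block when $n \geq 3$ are precisely the elaboration the paper omits.

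One part of your analysis goes beyond the paper and deserves emphasis rather than being buried in your final paragraph: your counterexample shows that the spacelike half of item 1 is \emph{false as literally stated}. Take $T$ with a $2 \times 2$ Jordan block of real eigenvalue $\lambda$ on the skew-normal pair $v_1, v_2$ (so $\bp{v_1,v_1} = 0$), and let the Euclidean block $D_{n-2}$ also have eigenvalue $\lambda$ with eigenvector $e_3$; then $v_1 + e_3,\ e_3,\ e_4, \dotsc, e_n$ are $n-1$ linearly independent spacelike eigenvectors of $T$, yet $T$ is not diagonalizable. Nothing in \cref{thm:clasSelfadj} forbids the Euclidean block from sharing the Jordan eigenvalue, so this configuration genuinely occurs. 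Hence ``linearly independent'' must be strengthened to ``mutually orthogonal'' (equivalently: the spacelike eigenvectors span a non-degenerate, hence Euclidean, hyperplane $W$), which is exactly the hypothesis your converse argument needs when it passes to the $T$-invariant line $W^\perp$ to produce a timelike eigenvector. With that correction every step of your proof is sound; without it, no proof of item 1 as stated can exist.
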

\phantomsection
\addcontentsline{toc}{section}{References}
\printbibliography
\end{document}